\newcommand{\norm}[1]{\left\lVert#1\right\rVert}
\newcommand\restr[2]{{
  \left.\kern-\nulldelimiterspace 
  #1 
  \right|_{#2} 
  }}
\DeclareMathOperator\supp{supp}
\DeclareMathOperator\Tr{Tr}
\newtheorem{thm}{Theorem}[section]
\newtheorem{hyp}[thm]{Hypothesis}
\newtheorem{rem}[thm]{Remark}
\newtheorem{lem}[thm]{Lemma}
\newtheorem{prop}[thm]{Proposition}
\begin{document}

\title{Mesoscopic central limit theorem for general $\beta$-ensembles}
\date{}

\author{Florent Bekerman\thanks{\texttt{bekerman@mit.edu}}}
\affil{MIT, Department of Mathematics.}
\author{Asad Lodhia \thanks{\texttt{lodhia@math.mit.edu}}}
\affil{MIT, Department of Mathematics.} 

\maketitle

\begin{abstract}
\noindent We prove that the linear statistics of eigenvalues of $\beta$-log gasses satisfying the one-cut and off-critical assumption with a potential $V \in C^6(\mathbb{R})$ satisfy a central limit theorem at all mesoscopic scales $\alpha \in (0; 1)$. We prove this for compactly supported test functions $f \in C^5(\mathbb{R})$  using loop equations at all orders along with rigidity estimates.
\end{abstract}

\section{Introduction}
\noindent We consider a system of $N$ particles on the real line distributed according to a density proportional to 
\begin{equation*}
\prod_{1\leq i < j \leq N} \lvert \lambda_i - \lambda_j \rvert^\beta e^{- N \sum V(\lambda_i)} \prod d\lambda_i \ ,
\end{equation*}
\noindent where $V$ is a continuous potential and $\beta > 0$. This system is called the $\beta$-log gas, or general $\beta$-ensemble and for classical values of $ \beta \in \{ 1,2,4 \}$, this distribution corresponds to the joint law of the eigenvalues of symmetric, hermitian or quaternionic random matrices with density proportional to $e^{-N \Tr V(M)} dM$ where $N$ is the size of the random matrix $M$. 

\noindent Recently, great progress has been made to understand the behaviour of $\beta$-log gasses. At the microscopic scale, the eigenvalues exhibit a universal behaviour (see \cite{BFG}, \cite{BEYI}, \cite{BEYII}, \cite{B}) and the local statics of the eigenvalues are described by the $Sine_\beta$ process in the bulk and the Stochastic Airy Operator at the edge (see \cite{VV} and \cite{RRV} for definitions). At the macroscopic level, the eigenvalues satisfy a central limit theorem and the re-centered linear statistics of the eigenvalues converge towards a Gaussian random variable. This was first proved in \cite{J} for polynomial potentials satisfying the one-cut assumption. In \cite{BGI}, the authors derived a full expansion of the free energy in the one-cut regime from which they deduce the central limit theorem for analytic potentials. The multi-cut regime is more complicated and in this setting, the central limit theorem does not hold anymore for all test functions (see \cite{BGII}, \cite{ShI}). In this article, we consider the scale between microscopic and macroscopic called the mesoscopic regime. Specifically, we study the linear fluctuations of the eigenvalues of general $\beta$-ensembles at the mesoscopic scale; we prove that for $\alpha \in (0 ; 1)$ fixed, $f$ a smooth function (whose regularity and decay at infinity will be specified later), and $E$ a fixed energy level
\begin{equation*}
\sum_{i=1}^N f\big(N^\alpha(\lambda_i - E)\big) \ - N \int f(N^\alpha(x - E)) d\mu_V(x)
\end{equation*}
converges towards a Gaussian random variable. 

\noindent Interest in mesoscopic linear statistics has surged in recent years. Results in this field of study were obtained in a variety of settings,  for Gaussian random matrices \cite{BdMK99a, FKS13},  and for invariant ensembles \cite{BD14,L16}. In many cases the results were shown at all scales $\alpha \in (0;1)$, often with the use of distribution specific properties. In more general settings, the absence of such properties necessitates other approaches to obtain the limiting behaviour at the mesoscopic regime. For example, an early paper studying mesoscopic statistics for Wigner Matrices was \cite{BdMK99b}, here the regime studied was $\alpha \in (0;\frac{1}{8})$, later using improved local law results this was pushed to $ \alpha \in (0 ; \frac{1}{3})$ \cite{LS15}, and recent work has pushed this to all scales \cite{HK16}. 

\noindent Extending these results to general $\beta$-ensembles  is a natural step. We also prove convergence at all mesoscopic scales. The proof of the main Theorem relies on the analysis of the loop equations from which we can deduce a recurrence relationship between moments, and the rigidity results from \cite{BEYI}, \cite{BEYII} to control the linear statistics. Similar results have been obtained before in \cite[Theorem 5.4]{bourgade}. There, the authors showed the mesoscopic CLT in the case of a quadratic potential, for small $\alpha$ (see Remark 5.5). 

\noindent  In Section 1, we introduce the model and  recall  some background results and Section 2 will be dedicated to the proof of \Cref{theoreme}.

\subsection{Definitions and Background}

\noindent  We consider the general $\beta$-matrix model.
\noindent For a   potential $V : \mathbb{R} \longrightarrow \mathbb{R}$  and $\beta > 0$, we denote the measure on $\mathbb{R}^N$
\begin{equation}
\mathbb{P}_{V}^N \ (d\lambda_1,\cdots,d\lambda_N):= \frac{1}{Z_{V}^N}\prod_{1\leq i < j \leq N} \lvert \lambda_i - \lambda_j \rvert^\beta e^{- N \sum V(\lambda_i)}  \prod d\lambda_i \ ,
\end{equation}

\noindent with  \[{Z_{V}^N}  = \int \prod_{1\leq i < j \leq N} \lvert \lambda_i - \lambda_j \rvert^\beta e^{- N\sum V(\lambda_i)}  \prod d\lambda_i  . \]  

\noindent It is well known that under $\mathbb{P}_{V}^N$ the empirical measure of the eigenvalues converge towards an equilibrium measure:

\begin{thm}
Assume that $V : \mathbb{R}  \longrightarrow \mathbb{R}$ is continuous and that 

\begin{equation*}
\liminf_{x \rightarrow \infty} \frac{V(x)}{\beta \log |x|} >1 .
\end{equation*} 

\noindent Then the energy defined by 
\begin{equation}\label{energy}
E(\mu) = \iint\left(\frac{V(x_1) + V(x_2)}{2}  - \frac{\beta}{2} \log |x_1-x_2 | \right) d\mu(x_1) d\mu(x_2)
\end{equation}

\noindent has a unique global minimum on the space $\mathcal{M}_1(\mathbb{R} )$ of probability measures on $\mathbb{R}$.

\noindent Moreover, under $\mathbb{P}_{V}^N$ the normalized empirical measure $L_N = N^{-1} \sum_{i=1}^N \delta_{\lambda_i}$ converges almost surely and in expectation towards the unique probability measure $\mu_{V} $ which minimizes the energy.

\noindent Furthermore, $\mu_V$ has compact support $A$ and is uniquely determined by the existence of a constant $C$ such that:
\begin{equation*}
\beta \int \log |x-y|  d\mu_{V}(y) - V(x) \leq C  \ ,
\end{equation*}
with equality almost everywhere on the support. The support of  $\mu_{V}$  is a union of intervals  $A = \underset {{0\leq h \leq g}}{\bigcup} [\alpha_{h,-} ; \alpha_{h,+}]$  with $\alpha_{h,-} < \alpha_{h,+}$  and if $V$ is smooth on a neighbourhood of $A$,
\begin{equation*}
\frac{d\mu_{V}}{dx}= S(x)\prod_{h=0}^g \sqrt{\lvert x - \alpha_{h,-} \rvert \lvert x - \alpha_{h,+} \rvert} \ , 
\end{equation*}
\noindent with $S$  smooth on a neighbourhood of $A$.
\end{thm}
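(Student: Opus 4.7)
The plan is to prove the four assertions in roughly the following order: existence and uniqueness of the minimizer, the variational characterization, the almost sure convergence of the empirical measure, and finally the structural description of the minimizer's support and density.

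For existence and uniqueness of the minimizer, I would first observe that the growth hypothesis on $V$ implies that the integrand in \eqref{energy} is bounded below by a proper function of $(x_1,x_2)$, so $E$ is well defined on $\mathcal{M}_1(\mathbb{R})$ with values in $(-\infty,+\infty]$ and is lower semicontinuous for the weak topology; the same growth yields tightness of minimizing sequences, so a minimizer exists. For uniqueness, the key is that the logarithmic kernel $-\log|x-y|$ is conditionally strictly positive definite on signed measures of total mass zero, which can be seen via the Fourier representation $-\log|x-y| = \int_0^\infty \frac{1}{t}(\mathbbm{1}_{|x-y|<t}-\text{const})\,dt$ or via the identity $-\log|x-y|=\int_0^\infty (e^{-t}-e^{-t|x-y|})\,dt/t$ combined with the fact that $e^{-t|x|}$ has positive Fourier transform. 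This makes $E$ strictly convex on $\mathcal{M}_1(\mathbb{R})$ and hence gives uniqueness.

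The variational characterization is obtained by the standard perturbation argument: for any $\nu\in\mathcal{M}_1(\mathbb{R})$ with $E(\nu)<\infty$, setting $\mu_t=(1-t)\mu_V+t\nu$ and differentiating $E(\mu_t)$ at $t=0^+$ gives $\int(V(x)-\beta\int\log|x-y|\,d\mu_V(y))\,d(\nu-\mu_V)(x)\geq 0$ for all such $\nu$. Taking $\nu$ supported at a point and $\nu=\mu_V$ itself then produces the constant $C$ such that the displayed inequality holds everywhere and is an equality $\mu_V$-almost everywhere. For the almost sure convergence of $L_N$ to $\mu_V$, I would follow the standard large deviations scheme: rewrite the density as $\exp(-N^2 E_N(L_N))$ up to a partition function, prove the weak LDP upper bound by an exponential tightness argument using the growth of $V$ (to enforce tightness of $L_N$) and the regularization of $-\log$ on the diagonal, and the lower bound by approximating any $\mu$ with finite energy by empirical measures on a regular grid; combined with the uniqueness of the minimizer and Borel--Cantelli this yields $L_N\to\mu_V$ almost surely and in expectation.

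The description of the support is where the real work lies. Differentiating the equality in the variational characterization in $x$ on the interior of $A$ gives a principal value equation $\mathrm{PV}\int\frac{d\mu_V(y)}{x-y}=\frac{V'(x)}{\beta}$, i.e.\ the Stieltjes transform $G(z)=\int\frac{d\mu_V(y)}{z-y}$ satisfies a scalar Riemann--Hilbert type jump condition on $A$. Standard manipulation (introducing $R(z)=G(z)^2-\frac{2}{\beta}V'(z)G(z)+P(z)$ for an analytic polynomial-like correction $P$) shows $R$ extends analytically across $A$, which yields an algebraic equation $G(z)=\frac{V'(z)}{\beta}-\sigma(z)\sqrt{Q(z)}$ with $\sigma$ analytic near $A$ and $Q$ a product of the square-root factors at the endpoints $\alpha_{h,\pm}$. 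Extracting the boundary values of $G$ on $A$ via the Plemelj formula then gives the claimed product form of $d\mu_V/dx$ with a smooth prefactor $S$. The main obstacle, and the part that requires the smoothness of $V$ near $A$, is controlling this analytic continuation and ruling out accumulation of endpoints, so that $A$ is in fact a finite disjoint union of intervals.
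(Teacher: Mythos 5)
The paper does not prove this theorem; it is stated as classical background (the sentence preceding it reads ``It is well known that\ldots'') and is a compilation of standard results from potential theory and the large-deviations / Riemann--Hilbert literature (Ben Arous--Guionnet and Johansson for the convergence of $L_N$ and the variational characterization, Deift--Kriecherbauer--McLaughlin and related works for the structure of the support and density). There is therefore no proof in the paper against which to compare.

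As a free-standing sketch of the classical argument, your outline is essentially correct and follows the standard route: lower semicontinuity plus exponential tightness for existence of a minimizer, conditional strict positive definiteness of the logarithmic kernel for uniqueness, the one-sided perturbation $\mu_t=(1-t)\mu_V+t\nu$ for the Euler--Lagrange characterization, a large deviation principle for $L_N$, and a Stieltjes-transform / scalar Riemann--Hilbert analysis for the density. Two points deserve a caveat. First, in $R(z)=G(z)^2-\tfrac{2}{\beta}V'(z)G(z)+P(z)$ the correction $P$ is a polynomial only when $V$ itself is polynomial; for general smooth $V$ one should take $P(z)=\tfrac{2}{\beta}\int\frac{V'(z)-V'(y)}{z-y}\,d\mu_V(y)$, which is analytic across $A$ but not polynomial, and the ``algebraic equation'' for $G$ is then only a local square-root representation rather than a genuine algebraic curve. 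Second, the claim that the number of intervals is finite is genuinely delicate: it follows from real-analyticity of $V$ (DKM), or in the merely $C^\infty$ case from an off-criticality assumption of the type the paper later imposes in Hypothesis~\ref{hypo}; your sketch names ``ruling out accumulation of endpoints'' as the obstacle but offers no mechanism for it, and $C^\infty$-smoothness alone does not provide one. Since the paper ultimately works only in the one-cut, off-critical setting, this gap is immaterial to what follows, but it is a real gap in the sketch as a proof of the theorem at the stated level of generality.
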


\subsection{Results}
\begin{hyp}\label{hypo} For what proceeds, we assume the following
\begin{itemize}
\item  $V$  is  continuous and goes to infinity faster than  $\beta \ \log \rvert x \lvert$.
\item The support of  $\mu_{V}$ is a connected interval   $A= [a ; b]$  and
\begin{equation*}
\frac{d\mu_{V}}{dx}= \rho_V(x)= S(x) \sqrt{(b-x)(x-a)}  \ \ \ \ with \  S > 0 \ on \ [a ; b] .
\end{equation*}
\item The function $ V(\cdot) - \beta \int \log |\cdot-y|  d\mu_{V}(y) $ achieves its minimum on the support only.

\end{itemize}
\end{hyp}

\begin{rem}
The second and third assumptions are typically known as the one-cut and off-criticality assumptions. In the case where the  support of the equilibrium measure is no longer  connected, the macroscopic central limit theorem does not hold anymore in generality (see \cite{BGII} , \cite{ShI}). Whether the  theorem holds for critical potentials is still an open question.
\end{rem}

\begin{thm}\label{theoreme}
Let $ 0 < \alpha <  1$ ,  $E$ a point in the bulk $(a ; b)$, $V\in C^6(\mathbb{R})$  and  $f \in C^5(\mathbb{R})$ with compact support. Then, under $\mathbb{P}_{V}^N $ 
\begin{equation*}
\sum_{i=1}^N f\big(N^\alpha(\lambda_i - E)\big) - N \int f(N^\alpha(x - E)) d\mu_V(x) \xrightarrow{\ \ \ \mathcal{M} \ \ \ } \mathcal{N}(0 , \sigma^2_f) \  ,
\end{equation*}
where the  convergence holds in  moments (and thus, in distribution), and
\begin{equation*}
\sigma^2_f = \frac{1}{2 \beta \pi^2 }\iint \left(\frac{f(x) - f(y)}{x-y}\right)^2 dx dy \ .
\end{equation*}
\end{thm}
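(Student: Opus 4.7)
The plan is to apply the method of moments. Writing $X_N := \sum_i f(N^\alpha(\lambda_i - E)) - N\int f(N^\alpha(x-E))d\mu_V(x)$, convergence to $\mathcal{N}(0,\sigma_f^2)$ in moments reduces to establishing the recurrence
$$\mathbb{E}\bigl[X_N^k\bigr] = (k-1)\,\sigma_f^2\,\mathbb{E}\bigl[X_N^{k-2}\bigr] + o(1),\qquad \mathbb{E}[X_N]=o(1),\qquad \mathbb{E}[X_N^2]\to \sigma_f^2,$$
from which the Gaussian moments follow by Wick's formula by induction on $k$.

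The main tool is the family of loop (Dyson--Schwinger) equations obtained by integration by parts in the density of $\mathbb{P}_V^N$. For a smooth $\phi$, the first loop equation takes the form
$$\mathbb{E}\!\left[X_N(\Xi\phi)\right] = \frac{\beta}{2N}\,\mathbb{E}\!\left[\iint\frac{\phi(x)-\phi(y)}{x-y}\,dX_N(x)dX_N(y)\right] + \frac{1-\beta/2}{N}\,\mathbb{E}\!\left[\sum_i \phi'(\lambda_i)\right],$$
with $X_N(\psi) := \sum_i\psi(\lambda_i)-N\int\psi\,d\mu_V$ and \emph{master operator}
$$\Xi\phi(x) := V'(x)\phi(x) - \beta\int\frac{\phi(x)-\phi(y)}{x-y}\,d\mu_V(y).$$
Multiplying by $\prod_{\ell=1}^{k-1}X_N(g_\ell)$ before the integration by parts yields the analogous multi-point identity, which is the engine of the moment recursion.

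Next, I would invert $\Xi$. Under \Cref{hypo}, on $A=[a,b]$ with $\sigma(x):=\sqrt{(b-x)(x-a)}$, the equation $\Xi\psi=\phi$ admits the explicit Tricomi solution
$$\psi(x) = -\frac{1}{\beta\pi^2\,\sigma(x)}\,\mathrm{p.v.}\!\int_a^b\!\frac{\sigma(y)}{y-x}\,\phi(y)\,dy,$$
modulo a polynomial correction fixed by the normalization condition. Applied to the mesoscopic profile $\phi_E(x):=f(N^\alpha(x-E))$, a change-of-variable and stationary-phase analysis gives $\psi_E(x)\approx \tfrac{1}{\beta\pi\rho_V(E)}\,(Hf)\bigl(N^\alpha(x-E)\bigr)$ near $E$, where $H$ is the Hilbert transform on $\mathbb{R}$; note that $\psi_E$ is itself mesoscopic, with derivative blow-up at scale $N^{-\alpha}$. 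Substituting $\phi=\psi_E$ into the multi-loop equation, the quadratic form on the right produces at leading order the pairing $(k-1)\sigma_f^2\,\mathbb{E}[X_N^{k-2}]$; a Parseval/Plemelj--Tricomi computation then identifies the coefficient with the announced $\sigma_f^2 = \frac{1}{2\beta\pi^2}\iint\!\bigl(\tfrac{f(x)-f(y)}{x-y}\bigr)^2dx\,dy$.

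The principal obstacle is to show that the remainder terms are genuinely $o(1)$. Since $\phi_E$ satisfies $\|\phi_E^{(j)}\|_\infty\sim N^{j\alpha}$, the terms $\frac{1}{N}\mathbb{E}[\sum_i\phi_E'(\lambda_i)]$ and the quadratic form of $X_N$ are, \emph{a priori}, polynomially divergent in $N$. To overcome this, I would invoke the eigenvalue rigidity of \cite{BEYI,BEYII}: with overwhelming probability, $|\lambda_i-\gamma_i|\le N^{-1+\varepsilon}\min(i,N-i)^{-1/3}$ uniformly in $i$. This yields bounds of the form $|X_N(\eta)|\lesssim N^\varepsilon(\|\eta\|_\infty+\|\eta'\|_\infty)$ for smooth $\eta$ supported in the bulk, which combined with the $N^{-\alpha}$-scale regularity of $\psi_E$ and the factors of $1/N$ already present in the loop equations closes all error estimates as long as $\alpha<1$. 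The $C^6$ regularity of $V$ and $C^5$ regularity of $f$ are precisely what is needed to justify the finitely many integrations by parts and master-operator inversions that appear in this bookkeeping.
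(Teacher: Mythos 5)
Your overall strategy matches the paper's (loop equations at all orders, inversion of the master operator $\Xi$, rigidity to control errors, moment recursion), but there is a genuine gap in the claim that the stated rigidity bound closes the error estimates for all $\alpha < 1$.

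The bound you invoke, $|X_N(\eta)|\lesssim N^\varepsilon\bigl(\|\eta\|_\infty+\|\eta'\|_\infty\bigr)$, is far too crude when applied to $\eta=\psi_E'$ or to the quadratic kernel. Since $\|\psi_E^{(j)}\|_\infty \sim N^{j\alpha}$ (up to a $\log N$), plugging $\psi_E'$ into that bound yields $|X_N(\psi_E')|\lesssim N^{\varepsilon+2\alpha}$, and the $L_N(\psi_E')$ term in the first loop equation only becomes negligible when $\alpha<\frac12$. Worse, the bilinear term $\frac{1}{N}\iint \frac{\psi_E(x)-\psi_E(y)}{x-y}\,dM_N\,dM_N$, estimated the same crude way, involves a third derivative of $\psi_E$ of size $N^{3\alpha}$ and only closes for $\alpha<\frac13$. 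So the argument as written does not reach all scales, which is the entire content of the theorem.

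What is missing is the quantitative \emph{decay} of $\Xi^{-1}\tilde f$ away from $E$: the paper shows (Lemma 2.3, eq.\ (2.8)) that for $N^\alpha|x-E|\geq M+1$ one has $|\Xi^{-1}(\tilde f)^{(p)}(x)|\leq C N^{-\alpha}\bigl((x-E)^{p+1}\wedge 1\bigr)^{-1}$, i.e.\ $\psi_E$ is not merely an $N^{-\alpha}$-wavelength oscillatory function with large sup-norm but is effectively \emph{localized} at scale $N^{-\alpha}$ with polynomial tails. Exploiting this requires splitting the eigenvalue indices into a core window $J_1$ of size $O(N^{1-\alpha})$ and two outer regions $J_2$, $J_3$ (Lemma 2.5), and applying the sup-norm bound only on $J_1$ while using the decay on $J_2\cup J_3$. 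This yields the much sharper $|M_N((\Xi^{-1}\tilde f)')|\lesssim N^{\alpha+\xi}\log N$, and similarly $O(N^{\alpha+2\xi}\log N)$ for the bilinear $dM_N\otimes dM_N$ term after an even more delicate treatment (the cross terms $J_1\times J_2$, etc., and the fact that the argument of $\Xi^{-1}(\tilde f)^{(3)}$ can re-enter the window require separate care). These are precisely the $o(N)$ bounds needed for all $\alpha\in(0,1)$. You should replace the naive sup-norm inequality by a localized/weighted version and carry out this region-by-region bookkeeping, otherwise the proof only establishes the sub-$\frac13$ regime.

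Two smaller remarks: your derivation of the higher loop equations by inserting the product $\prod_\ell X_N(g_\ell)$ before integrating by parts is equivalent to the paper's route of perturbing $V\mapsto V+\delta h$ and differentiating at $\delta=0$; and your Parseval/Plemelj identification of $\sigma_f^2$ would indeed produce the stated constant, matching the paper's direct integration by parts on $\int\Xi^{-1}\tilde f\cdot\tilde f'\,d\mu_V$. Those parts are fine.
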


\noindent Note that, as in the macroscopic central limit theorem, the variance is universal in the potential with a multiplicative factor proportional to $\beta$. Interestingly and in contrast with the macroscopic scale, the limit is always centered.

\noindent The proof relies on an explicit computation of the moments of the linear statistics. We will use two tools: optimal rigidity for the  eigenvalues of beta-ensembles to provide a bound on the linear statistics  (as in \cite{BEYI}, \cite{BEYII}) and the  loop equations at all orders to derive a recurrence relationship between the moments. 

\section*{Acknowledgements} The authors would like to thank Alice Guionnet for the fruitful discussions and the very helpful comments.

\section{Proof of \Cref{theoreme}}
For what follows, set
\begin{equation*}
L_N = \frac{1}{N}\sum_{i} \delta_{\lambda_i}, \quad M_N = \sum_{i=1}^N \delta_{\lambda_i}  - N \mu_V.
\end{equation*}
and for a measure $\nu$ and an integrable function $h$  set 
\begin{equation*}
\nu(h) = \int h d\nu \qquad  \mathrm{and}  \qquad \tilde{\nu}(h) = \int h d\nu -  \mathbb{E}_V^N \Big( \int h d\nu \Big),
\end{equation*}
when $\nu$ is random and where $\mathbb{E}_V^N$ is expectation with respect to $\mathbb{P}_V^N$. Further $f$ will by any function as in \Cref{theoreme}, and 
\begin{equation*}
\tilde{f}(x) := f(N^\alpha(x-E)).
\end{equation*}
Finally, for any function $g \in C^p(\mathbb{R})$, let
\begin{equation*}
\|g \|_{C^p(\mathbb{R})} := \sum_{l=0}^p\sup_{x\in \mathbb{R}} |g^{(l)}(x)|,
\end{equation*}
when it exists.

\subsection{Loop Equations}
\noindent To prove the convergence, we use the loop equations at all orders. Loop equations have been used previously to derive recurrence relationships between correlators and derive a full expansion of the free energy for $\beta$-ensembles in \cite{ShI}, \cite{BGII}, and  \cite{BGI} (from which the authors also derive a macroscopic central limit theorem). The  first loop equation was used to prove the central limit theorem at the macroscopic scale in \cite{J} and used subsequently in \cite{bourgade}. Here, rather than using the first loop equation to control the Stieltjes transform as  in \cite{J} and \cite{bourgade}, we rely on the analysis of the loop equations at all orders  to compute directly the moments.

\begin{prop}\label{loopequations}
\noindent Let  $h$, $h_1, h_2, \cdots$ be a sequence of functions in $C^1(\mathbb{R})$. Define 
\begin{equation}
 F_1^N(h)  = \frac{\beta}{2}\iint \frac{h(x)-h(y)}{x-y} dL_N(x) dL_N(y) - L_N (h V')  + \frac{1}{N} \big(1 - \frac{\beta}{2}\big) L_N(h')   
\end{equation}
and for all $k \geq 1$
\begin{equation}
 F^N_{k+1}(h, h_1, \cdots , h_k)  = F^N_k(h, h_1, \cdots, h_{k-1}) \tilde{M}_N(h_k) + \left(\prod_{l=1}^ {k-1} \tilde{M}_N(h_l)\right)L_N(h h'_k)
\end{equation}
where the product is equal to $1$ when $k=1$. Then we have for all $k \geq 1$
\begin{equation}
\mathbb{E}_V^N \big( F^N_k(h, h_1, \cdots , h_{k-1}) \big) =  0.
\end{equation}
\end{prop}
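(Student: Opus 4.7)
The plan is to derive all the identities $\mathbb{E}_V^N[F_k^N] = 0$ by integration by parts against the joint density
\begin{equation*}
p_N(\lambda) = \frac{1}{Z_V^N}\prod_{i<j}|\lambda_i - \lambda_j|^\beta e^{-N\sum_m V(\lambda_m)},
\end{equation*}
whose log-derivative is $\partial_i \log p_N = \beta \sum_{j \neq i}(\lambda_i - \lambda_j)^{-1} - N V'(\lambda_i)$. The hypothesis that $V$ grows faster than $\beta \log|x|$ makes $p_N$ decay rapidly enough that all boundary terms in the IBP vanish, so for any smooth $G$ of polynomial growth,
\begin{equation*}
\sum_i \mathbb{E}_V^N\bigl[\partial_i\bigl(h(\lambda_i) G(\lambda)\bigr)\bigr] \;=\; -\sum_i \mathbb{E}_V^N\bigl[h(\lambda_i) G(\lambda)\, \partial_i \log p_N\bigr].
\end{equation*}
The identity for $F_k^N$ will come from choosing $G = \prod_{l=1}^{k-1}\tilde M_N(h_l)$ (empty product when $k=1$).

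The base case $k=1$ is the classical first loop equation, obtained by taking $G \equiv 1$. Substituting the formula for $\partial_i \log p_N$ and symmetrizing the double sum as
\begin{equation*}
\sum_{i \neq j}\frac{h(\lambda_i)}{\lambda_i - \lambda_j} \;=\; \frac{1}{2}\sum_{i \neq j}\frac{h(\lambda_i) - h(\lambda_j)}{\lambda_i - \lambda_j} \;=\; \frac{N^2}{2}\iint \frac{h(x)-h(y)}{x-y}\,dL_N(x)\,dL_N(y) - \frac{N}{2}L_N(h'),
\end{equation*}
where the final equality fills in the diagonal $i=j$ by $h'(\lambda_i)$, then dividing by $N^2$, yields precisely $\mathbb{E}[F_1^N(h)] = 0$. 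The anomalous coefficient $(1-\beta/2)/N$ in front of $L_N(h')$ arises as the sum of the IBP contribution $\sum_i h'(\lambda_i)$ and the diagonal correction in the symmetrization.

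For $k \ge 2$ I would induct, applying the same IBP identity but with $G := \prod_{l=1}^{k-1}\tilde M_N(h_l)$. The crucial observation is that the centering constants $\mathbb{E}[M_N(h_l)]$ in $\tilde M_N$ are deterministic and drop out under $\partial_i$, so
\begin{equation*}
\partial_i G \;=\; \sum_{j=1}^{k-1} h_j'(\lambda_i) \prod_{l \neq j}\tilde M_N(h_l),
\end{equation*}
and the extra term $\sum_i h(\lambda_i) \partial_i G$ aggregates into corrections proportional to $(\prod_{l\neq j}\tilde M_N(h_l))\,L_N(h h_j')$. Combined with the same symmetrization as in the base case, the IBP identity takes the shape $\mathbb{E}[G \cdot F_1^N(h)] + (\text{corrections from }\partial_i G) = 0$, which matches $\mathbb{E}[F_k^N(h,h_1,\dots,h_{k-1})] = 0$ once one unrolls the defining recursion all the way down to $F_1^N$. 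The main obstacle is purely combinatorial: verifying term by term that each correction produced by the product rule on $G$ matches exactly one of the summands generated by iterating the recursion $F_{k+1}^N = F_k^N \tilde M_N(h_k) + (\prod_{l<k}\tilde M_N(h_l)) L_N(h h_k')$, and that the diagonal corrections from symmetrization do not spoil this pairing. Once this bookkeeping is pinned down, the loop equations at all orders follow from a single application of the same IBP that handled $k = 1$.
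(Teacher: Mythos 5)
Your argument is correct, but the route for $k\geq 2$ is genuinely different from the paper's. The $k=1$ base case matches: both use the IBP identity $\sum_i\mathbb{E}[\partial_i(h(\lambda_i)p_N)]=0$, and the symmetrization together with the diagonal correction $-\frac{1}{2}\sum_i h'(\lambda_i)$ is exactly what produces the coefficient $\frac{1}{N}(1-\beta/2)$. For the higher orders, however, the paper does not insert $G=\prod_{l<k}\tilde M_N(h_l)$ into the IBP. It instead obtains the $(k{+}1)$-st equation from the $k$-th by replacing $V$ with $V+\delta h_k$ and differentiating $\mathbb{E}_{V+\delta h_k}^N\big[F_k^N(\cdot\,;V+\delta h_k)\big]=0$ at $\delta=0$: differentiating the tilted density contributes the factor $\tilde M_N(h_k)$, differentiating the $V'$-dependence inside $F_1^N$ produces the new $L_N(hh_k')$ term, and the $\delta$-dependence of the centering constants inside each $\tilde M_N$ only generates multiples of lower-order $F_j^N$'s, whose expectations vanish by the induction hypothesis. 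Your single direct IBP with $G$ produces the same identity in one shot once one unrolls the recursion to its closed form $F_k^N=F_1^N\prod_{l<k}\tilde M_N(h_l)+\sum_{j<k}\big(\prod_{l\neq j}\tilde M_N(h_l)\big)L_N(hh_j')$. Both are standard generators of the loop-equation hierarchy: yours is more explicit about where each term originates, while the paper's potential-variation trick is more compact and avoids the term-by-term matching you worry about, precisely because the recursion is built to record the $\delta$-derivative of $F_k^N$. When you carry out the book-keeping, fix a consistent scaling: the $F_1^N$ as written in the proposition and the recentered expression in \eqref{loop1} differ by an overall factor of $N$, and the recursion closes cleanly with the latter normalization.
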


\begin{proof}
\noindent  The first loop equation is derived by integration by parts. We derive the loop equation at order $k+1$ from the one at order $k$ by replacing $V$ by  $V + \delta h_k$ and differentiating at $\delta = 0$.
\end{proof}

\noindent It will be easier to compute recursively the moments by re-centering the first loop equation. To that end, define the operator $\Xi$ acting  on  smooth functions $h:\mathbb{R} \longrightarrow \mathbb{R}$ by
\begin{equation*}
\Xi h(x) =  \beta  \int \  \frac{h(x)-h(y)}{x-y}   d\mu_{V}(y) -  V'(x) h(x) \ .
\end{equation*}

\noindent We then use equilibrium relations to recenter $L_N$ by $\mu_{V}$. Consider for $\delta$ in a neighbourhood of $0$, $\mu_{V,\delta} = (x + \delta h(x))\sharp \mu_{V}$, where for a map $T$ and measure $\mu$, $T \sharp \mu$ refers to the push-forward measure of $\mu$ by $T$. Then by (\ref{energy}) we have $E(\mu_{V,\delta})  \geq E(\mu_{V})$ . By differentiating at $\delta = 0$  we obtain

\begin{equation}\label{equilibre}
\frac{\beta}{2} \iint \frac{h(x)-h(y)}{x-y} d\mu_{V}(x)d\mu_{V}(y) = \int V'(x)f(x) d\mu_{V}(x) \ ,
\end{equation} 
and thus
\begin{multline*}
 \frac{\beta }{2}  \iint \frac{h(x)-h(y)}{x-y}dL_N(x) dL_N(y) -  L_N(h V') = \\
 \frac{1}{N} M_N(\Xi h) +  \frac{\beta}{2 N^2} \iint \frac{h(x)-h(y)}{x-y} dM_N(x) dM_N(y).
\end{multline*}

\noindent Consequently, we can write
\begin{equation}\label{loop1}
F_1^N(h) =  M_N(\Xi h) + \Big(1- \frac{\beta}{2} \Big) L_N(h') +  \frac{1}{N}  \left[\frac{\beta}{2}   \iint  \frac{h(x)-h(y)}{x-y} dM_N(x) dM_N(y)   \right].
\end{equation}

\noindent One of  the key features of the operator $\Xi$ is that it is invertible (modulo constants) in the space of smooth functions. More precisely, we have the following Lemma (see  Lemma 3.2 of \cite{BFG} for the proof):

\begin{lem}{Inversion of  $\Xi$}

\noindent Assume that $V \in C^p(\mathbb{R})$ and satisfies Hypothesis \ref{hypo}. Let $[a ; b]$ denote the support of $\mu_{V}$ and set 
\begin{equation*}
\frac{d\mu_{V}}{dx} = S(x) \sqrt{(b - x)(x - a)}= S(x) \sigma(x) ,
\end{equation*}
\noindent where  $S > 0$ on $[a ; b]$.\\
\noindent Then for any $k\in C^r(\mathbb{R})$ there exists a unique constant $c_k$ and $h\in C^{(r-2)\wedge(p-3)}(\mathbb{R})$ such that
\begin{equation*}
\Xi(h) = k + c_k \ .
\end{equation*}

\noindent Moreover the  inverse is given  by  the following formulas:

\begin{itemize}
\item $ \forall x \in \supp(\mu_{V})$
\begin{equation}\label{inversesup}
h(x) = - \frac{1}{\beta \pi^2 S(x)   } \Big( \   \int_{a}^{b} \frac{k(y) - k(x)}{\sigma(y)(y-x)} dy  \Big) 
\end{equation}
\item $ \forall x \notin \supp(\mu_{V})$
\begin{equation}\label{inverseout}
h(x) = \frac{\beta  \int \frac{h(y)}{x-y} d\mu_{V}(y) - k(x) - c_k}{\beta  \int \frac{1}{x-y} d\mu_{V}(y) - V'(x)} \ .
\end{equation}
\end{itemize}
Note that the definition  (\ref{inverseout}) is proper since $h$ has been defined on the support.

\noindent We shall denote this inverse by  $\Xi^{-1}k$.
\end{lem}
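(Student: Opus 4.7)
The plan is to solve $\Xi h = k + c_k$ separately on the interior of the support $[a,b]$ and on its complement, then glue the two pieces together. On $(a,b)$ the Euler-Lagrange characterization of $\mu_V$ (differentiating the equality case of the equilibrium inequality) gives $V'(x) = \beta\,\mathrm{PV}\!\int \tfrac{d\mu_V(y)}{x-y}$. Substituting this into the definition of $\Xi$ causes $V'(x)h(x)$ to cancel against the diagonal part of the double integral, reducing the operator to the weighted finite Hilbert transform
$$\Xi h(x) = -\beta\,\mathrm{PV}\!\int_a^b \frac{h(y)\rho_V(y)}{x-y}\,dy, \qquad x\in(a,b).$$

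Solving $\Xi h = k + c_k$ on $(a,b)$ is thus equivalent to inverting the finite Hilbert transform on $[a,b]$ applied to $\phi := h S\sigma$. I would invoke the Tricomi-Söhngen inversion formula on the class of solutions that vanish at both endpoints---this is the right class since $\sigma(a)=\sigma(b)=0$ while $hS$ must remain bounded and smooth. The standard solvability condition $\int_a^b (k(y)+c_k)/\sigma(y)\,dy = 0$, combined with the classical identity $\int_a^b dy/\sigma(y) = \pi$, uniquely forces $c_k = -\tfrac{1}{\pi}\int_a^b k(y)/\sigma(y)\,dy$. The explicit Tricomi formula then produces $h$ on the support; exploiting the further identity $\mathrm{PV}\!\int_a^b dy/(\sigma(y)(y-x)) = 0$ for $x\in(a,b)$, I can subtract $k(x)$ inside the numerator without changing the integral, which simultaneously absorbs the contribution of $c_k$ and regularizes the integrand at $y=x$, producing exactly the formula (\ref{inversesup}).

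For $x\notin[a,b]$ no operator inversion is needed: one simply treats $\Xi h(x) = k(x)+c_k$ as an algebraic identity for the scalar $h(x)$, the integrand $h(y)$ inside already being known from the previous step for $y\in[a,b]$. Rearranging then yields (\ref{inverseout}), whose denominator equals $-U'(x)$, where $U(x) := V(x) - \beta\int \log|x-y|\,d\mu_V(y)$ is the effective potential. By the off-criticality and the third item of Hypothesis \ref{hypo}, $U$ attains its minimum only on the support, so $U'$ is nonvanishing off $[a,b]$ and the formula is well-defined.

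The step I expect to be most delicate is the regularity count $h\in C^{(r-2)\wedge(p-3)}(\mathbb{R})$. On the interior, differentiating under the integral in (\ref{inversesup}) and Taylor-expanding $k(y)-k(x)$ around $y=x$ transfers two derivatives from $k$ onto the kernel, accounting for the loss $r\to r-2$; the $p\to p-3$ loss enters through $S = \rho_V/\sigma$, whose regularity is controlled by $V$ via the Stieltjes transform of $\mu_V$. The final task is to show that the interior and exterior formulas agree and match smoothly at the endpoints $\{a,b\}$, which requires verifying that the apparent singularities from $1/\sigma(x)$ in (\ref{inversesup}) and from $1/U'(x)$ in (\ref{inverseout}) are tame at $a,b$. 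This boundary analysis is the technical core of the proof and where I would concentrate effort.
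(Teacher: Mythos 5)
The paper does not actually prove this lemma: it cites Lemma~3.2 of~\cite{BFG} and moves on, so there is no in-text argument to compare against. That said, the route you sketch --- reducing $\Xi$ on the bulk to the weighted finite Hilbert transform $\Xi h(x)=-\beta\,\mathrm{PV}\!\int_a^b \frac{h(y)\rho_V(y)}{x-y}\,dy$ via the differentiated equilibrium relation, applying Tricomi's inversion in the bounded-endpoint class, reading off $c_k$ from the orthogonality $\int_a^b (k+c_k)/\sigma=0$ together with $\int_a^b dy/\sigma=\pi$, and then solving for $h$ pointwise off the support --- is exactly the standard mechanism underlying the BFG lemma, and the algebra you carry out to land on \eqref{inversesup} (subtracting $k(x)$ for free via $\mathrm{PV}\!\int_a^b \frac{dy}{\sigma(y)(y-x)}=0$) is correct.

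Two points worth tightening. First, a small slip: formula~\eqref{inversesup} contains $1/S(x)$, not $1/\sigma(x)$, and $S$ is bounded away from $0$ on $[a;b]$, so there is in fact no endpoint singularity to tame in that prefactor; the genuine delicacy at $a,b$ is whether the integral (and its derivatives) in~\eqref{inversesup} extend continuously to the endpoints and glue $C^{(r-2)\wedge(p-3)}$-smoothly to the off-support expression~\eqref{inverseout}. Second, your derivative-count heuristic is not yet a proof: writing $k(y)-k(x)=(y-x)\int_0^1 k'(x+t(y-x))\,dt$ and differentiating $p$ times in $x$ only produces $k^{(p+1)}$, which would suggest a loss of a single derivative rather than two; the extra loss comes from the endpoint analysis and from the regularity of $S$ (hence $V$), which is precisely the part you flag as ``where I would concentrate effort'' but do not carry out. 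So your proposal is a correct and well-aimed plan, but the boundary matching and the rigorous accounting of the $(r-2)\wedge(p-3)$ index are still missing; since the paper itself outsources these exact points to~\cite{BFG}, you should either import that argument or supply it, rather than leave it as a declared intention.
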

\begin{rem}
For $f$ and $V$ as in \Cref{theoreme},  $p = 6$ and $r = 5$ so $\Xi^{-1}\tilde{f} \in C^3(\mathbb{R})$.
\end{rem}

\noindent In order to bound the linear statistics we use the following lemma to bound $\Xi^{-1}(\tilde{f})$ and its derivatives.

\begin{lem}
\label{bornebounds}
Let $\supp f \subset [-M,M]$ for some constant $M > 0$. For each $p \in \{1,2,3\}$, there is a constant $C > 0$ such that 
\begin{equation}\label{bornesup}
\norm{ \Xi^{-1}(\tilde{f})}_{C^p(\mathbb{R})} \leq C  N^{p \alpha}\log N,
\end{equation}
Moreover, there is a  constant $C$ such that whenever $ N^\alpha|x - E| \geq M + 1$
\begin{equation}\label{borneexsup}
\Big| \Xi^{-1}(\tilde{f})^{(p)} (x) \Big| \leq  \frac{C}{N^\alpha  \big(  (x - E)^{p+1} \wedge 1 \big) },
\end{equation}
\begin{proof}
\noindent We start with \eqref{bornesup}. Using \eqref{inverseout}, we see that $\Xi^{-1}(\tilde{f})$ and its derivatives are clearly uniformly bounded outside $\supp \mu_V$. For $x \in \supp \mu_V$ we  use 
\begin{equation*}
\Xi^{-1}(\tilde{f})(x) = -\frac{N^\alpha}{\beta \pi^2 S(x)} \int_a^b \frac{1}{\sigma(y)} \int_0^1 f'\big(N^\alpha t (x - E) + N^\alpha (1-t) (y - E)\big) dt dy \,
\end{equation*}
so that 
\begin{multline*}
\Xi^{-1}(\tilde{f})^{(p)}(x) = -\frac{1}{\beta \pi^2 } \sum_{l=0}^p \Bigg\{\binom{p}{l} \left(\frac{1}{S}\right)^{(p-l)}(x) \\
\times \int_a^b \frac{N^{(l+1)\alpha}}{\sigma(y)} \int_0^1 t^l f^{(l+1)}\big(N^\alpha t (x - E) + N^\alpha (1-t) (y - E)\big) dt dy \Bigg\}\ .
\end{multline*}
Let $A(x) =  \big\{ (t,y) \in [0;1] \times [a;b] \ , \ N^\alpha  |t (x - E) + (1-t) (y - E)| \leq M  \big\}$. We have
\begin{equation}
\int_0^1 \mathbbm{1}_{A(x)}(t,y) dt \leq \frac{2M}{N^\alpha |x-y|} \wedge  1 \, 
\end{equation}
and thus 
\begin{equation*}
 \int_a^b \frac{N^{(l+1)\alpha}}{\sigma(y)} \int_0^1 | f^{(l+1)}\big(N^\alpha t (x - E) + N^\alpha (1-t) (y - E)\big) | dt dy \ \leq C \log N N^{l \alpha} ,
\end{equation*}
and this proves \eqref{bornesup}. 

\noindent We now proceed with the proof  of \eqref{borneexsup}. First, let $x\in \supp \mu_V$ such that $N^\alpha |x-E| \geq M+1$. The inversion formula \eqref{inversesup} writes 
\begin{equation}\label{invint}
\begin{split}
\Xi^{-1}(\tilde{f})(x) &= -\frac{1}{\beta \pi^2 S(x)} \int_a^b \frac{f(N^\alpha(y-E))}{\sigma(y)(y-x)}  dy \\
&=  -\frac{1}{\beta \pi^2 S(x)} \int_{-M}^{M} \frac{f(u)}{\sigma(E + \frac{u}{N^\alpha})  (u - N^\alpha (x-E))} \ .
\end{split}
\end{equation}
By differentiating this formula, we obtain \eqref{borneexsup} for $x\in \supp \mu_V$. The result for
$x\notin \supp \mu_V$  is obtained similarly using \eqref{inverseout}.  
\end{proof}

\end{lem}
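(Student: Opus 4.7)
The plan is to feed $\tilde f$ into the explicit inversion formula for $\Xi^{-1}$ and carefully track the powers of $N^\alpha$ that appear. The argument naturally splits according to whether $x$ lies in the support $[a,b]$ and, for \eqref{borneexsup}, whether $N^\alpha|x-E|$ is large. For $x \notin [a,b]$, formula \eqref{inverseout} expresses $\Xi^{-1}(\tilde f)(x)$ as a smooth ratio whose denominator stays bounded away from zero by the off-criticality hypothesis (third bullet of \Cref{hypo}) and whose numerator involves $\tilde f$ together with a contribution inherited from the on-support values; since $\tilde f$ is uniformly bounded by $\|f\|_\infty$, this case contributes $O(1)$ to every derivative and carries no polynomial factor in $N$.

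For $x \in [a,b]$ the first move is to remove the apparent singularity at $y=x$ in \eqref{inversesup} by writing $\tilde f(y) - \tilde f(x) = N^\alpha(y-x)\int_0^1 f'\!\bigl(N^\alpha((1-t)x + ty - E)\bigr)\,dt$, so that
\begin{equation*}
\Xi^{-1}(\tilde f)(x) \;=\; -\frac{N^\alpha}{\beta\pi^2 S(x)} \int_a^b \frac{1}{\sigma(y)} \int_0^1 f'\!\bigl(N^\alpha((1-t)x + ty - E)\bigr)\,dt\,dy.
\end{equation*}
Differentiating $p$ times in $x$ and applying Leibniz distributes derivatives onto $1/S$ (smooth and bounded on $[a,b]$ since $S>0$) and onto $f'$, each derivative on the latter producing a factor $N^\alpha$. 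The remaining task is to bound the integral of $|f^{(l+1)}|$: since its argument is linear in $t$ with slope $N^\alpha(x-y)$, the $t$-set on which $f^{(l+1)}\neq 0$ has length at most $\min\bigl(1,\,2M/(N^\alpha|x-y|)\bigr)$. Integrating this bound against the integrable weight $1/\sigma(y)$ over $[a,b]$ gives a quantity of order $(\log N)/N^\alpha$, and combined with the overall prefactor $N^{(l+1)\alpha}$ this delivers the target $N^{p\alpha}\log N$ bound \eqref{bornesup}.

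For the far-field estimate \eqref{borneexsup} the cleanest route is to change variables $y = E + u/N^\alpha$ directly in \eqref{inversesup}, obtaining
\begin{equation*}
\Xi^{-1}(\tilde f)(x) \;=\; -\frac{1}{\beta\pi^2 S(x)} \int_{-M}^{M} \frac{f(u)}{\sigma(E + u/N^\alpha)\,\bigl(u - N^\alpha(x-E)\bigr)}\,du,
\end{equation*}
which makes the $x$-dependence explicit. When $N^\alpha|x-E| \geq M+1$, the denominator satisfies $|u - N^\alpha(x-E)| \geq N^\alpha|x-E| - M$, which is comparable to $N^\alpha|x-E|$ (and bounded below by $1$) for all $u \in [-M,M]$. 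Differentiating $p$ times in $x$ brings down $p$ additional factors of $1/(u - N^\alpha(x-E))$, each of size $1/(N^\alpha|x-E|)$, which combine with the prefactor $1$ already present to yield the claimed $1/\bigl(N^\alpha(x-E)^{p+1}\wedge 1\bigr)$ decay (using $p+1$ powers of $N^\alpha|x-E|$ and one inverse power from $1/N^\alpha$ in the change of variable). The $x \notin [a,b]$ subcase is handled analogously starting from \eqref{inverseout}.

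The main obstacle is the near-support singularity in \eqref{inversesup}: a naive bound on $\int \tilde f(y)/\bigl((y-x)\sigma(y)\bigr)\,dy$ would blow up as a positive power of $N$, since $\tilde f$ concentrates on a window of width $N^{-\alpha}$ around $E$ and the Hilbert-type kernel $1/(y-x)$ contributes an $N^\alpha$ when $x$ sits inside this window. The device of writing $\tilde f(y)-\tilde f(x)$ as an integral of $f'$ along the segment, together with the $\min\bigl(1, C/(N^\alpha|x-y|)\bigr)$ bound on the $t$-support, converts this potential polynomial blow-up into the much milder $\log N$ factor, exploiting precisely the fact that $f$ is a fixed compactly supported function that has been rescaled by $N^\alpha$.
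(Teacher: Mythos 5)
Your proposal follows the paper's proof essentially verbatim: removing the $y=x$ singularity by writing $\tilde f(y)-\tilde f(x)=N^\alpha(y-x)\int_0^1 f'(\cdot)\,dt$, applying Leibniz with derivatives landing on $1/S$ and on $f'$, bounding the $t$-set where $f^{(l+1)}\neq 0$ by $\min\bigl(1,2M/(N^\alpha|x-y|)\bigr)$ to get the $\log N$, and then for the far-field bound changing variables to $u=N^\alpha(y-E)$ and differentiating the resulting Cauchy-type kernel. One small slip in the far-field bookkeeping: after the change of variables there is no residual $1/N^\alpha$ prefactor (the Jacobian cancels the rescaled kernel), and each $x$-derivative produces a factor $\frac{N^\alpha}{u-N^\alpha(x-E)}$ (with the $N^\alpha$ from the chain rule), so after $p$ derivatives one has magnitude $\frac{N^{p\alpha}}{(N^\alpha|x-E|)^{p+1}}=\frac{1}{N^\alpha|x-E|^{p+1}}$; your account drops the chain-rule $N^\alpha$'s and introduces a phantom $1/N^\alpha$ from the change of variable, two errors that happen to cancel, so the stated bound is still correct.
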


\subsection{Control of the linear statistics}
\noindent We now make use of the strong rigidity estimates proved  in \cite{BEYII} (Theorem 2.4) to control the linear statistics. We recall the result here

\begin{thm}
\label{thm::BEYrigid}
Let $\gamma_i$ the quantile defined by
\begin{equation}
\int_a^{\gamma_i} d\mu_V(x) = \frac{i}{N} .
\end{equation}

\noindent Then, under Hypothesis \ref{hypo} and for all $\xi > 0$ there exists constants $c >0$ such that for $N$ large enough 

\begin{equation*}
\mathbb{P}_V^N \big(|\lambda_i - \gamma_i | \geq N^{-2/3 +\xi } \  \hat{i}^{-1/3} \big) \leq e^{-N^c} \ ,
\end{equation*}

\noindent where $\hat{i} = i \wedge (N+1 - {i})$.
\end{thm}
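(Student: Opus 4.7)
The statement recalled here is the optimal rigidity estimate of Bourgade--Erd\H{o}s--Yau. The strategy I would follow is the standard local-law-to-rigidity pipeline adapted to general $\beta$-ensembles: first establish concentration of the empirical measure at a macroscopic scale, then upgrade this via the loop equation to a strong local law on the Stieltjes transform, and finally convert the local law to a counting-function bound that inverts to pointwise rigidity.

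For the initial weak concentration, I would exploit the explicit joint density and the log-Sobolev-type inequality available for $\mathbb{P}_V^N$ (in the convex regime via Bakry--\'Emery, otherwise by perturbation), together with the fact that under the one-cut off-critical hypothesis the energy $E(\mu)-E(\mu_V)$ controls a squared Sobolev $H^{-1/2}$ distance to $\mu_V$. This yields an estimate of the form
\begin{equation*}
\mathbb{P}_V^N\!\big(\|L_N-\mu_V\|_{H^{-1/2}} \geq N^{-\kappa}\big) \leq e^{-c N^{2\kappa}}
\end{equation*}
for some small $\kappa>0$, giving uniform control on $m_N(z)-m_V(z)$ for $z$ at mesoscopic distance from the real axis. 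The $N^2$ speed of the energy functional in the joint density is what furnishes the subexponential tail $e^{-N^c}$ required by the statement.

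Next I would apply the re-centered first loop equation \eqref{loop1} with the test function $h_z(x)=1/(x-z)$. Using that $\Xi h_z$ recovers $h_z$ up to a non-vanishing prefactor $P(z)$ on a complex neighbourhood of $[a,b]$, the equation rearranges into a self-consistent scalar relation of the schematic form
\begin{equation*}
P(z)\,\big(m_N(z)-m_V(z)\big) = Q(z)\,\big(m_N(z)-m_V(z)\big)^2 + \mathcal{O}(1/N)
\end{equation*}
with high probability. A bootstrap over dyadic scales $\eta_k \searrow N^{-1+\xi}$, fed with the weak bound of the previous paragraph as initial input, yields the strong local law $|m_N(E+i\eta)-m_V(E+i\eta)| \leq N^\xi/(N\eta)$ uniformly for $\eta \in [N^{-1+\xi},1]$, with probability $1 - e^{-N^c}$. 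A Helffer--Sj\"ostrand contour representation applied to a smoothed indicator then transfers this to the counting-function bound $|\#\{j:\lambda_j \leq E\}-N\mu_V([a,E])|\leq N^\xi$, and inverting this via the local behaviour $\rho_V(x)\sim S(x)\sqrt{(b-x)(x-a)}$ (with $S$ bounded below on $[a,b]$) produces the claimed estimate, the $\hat{i}^{-1/3}$ factor arising naturally from the square-root vanishing of $\rho_V$ at the edges.

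The main obstacle is closing the self-consistent equation bootstrap down to the optimal scale $\eta = N^{-1+\xi}$, especially near the spectral edges where the prefactor $P(z)$ degenerates like $\sqrt{\mathrm{dist}(z,\{a,b\})}$ and the stability of inversion becomes delicate. Controlling the quadratic remainder $Q(z)(m_N-m_V)^2$ at this degenerate scale requires a multi-scale iteration of the concentration estimate and a careful resolvent analysis near the edges; this is the technical core of \cite{BEYII} and is precisely what upgrades the earlier weaker rigidity of order $N^{-1/2+\xi}$ to the sharp $N^{-2/3+\xi}\hat{i}^{-1/3}$ scaling used in the sequel.
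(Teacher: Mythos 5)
The paper does not prove this statement: it is imported verbatim from Bourgade--Erd\H{o}s--Yau \cite{BEYII} (Theorem~2.4 there) and used as a black-box rigidity input. There is therefore no internal proof to compare against, and for the purposes of this article the correct move is simply the citation. Your sketch is nonetheless a reasonable high-level account of the loop-equation-to-local-law-to-rigidity machinery that underlies the BEY result and its streamlined descendants: macroscopic concentration from the $N^2$-speed of the energy functional, the self-consistent equation for the Stieltjes transform obtained from the first loop equation with $h_z(x)=1/(x-z)$, a multi-scale bootstrap down to the optimal $\eta$, Helffer--Sj\"ostrand to pass to the counting function, and inversion via the square-root vanishing of $\rho_V$ at the edges to produce the $\hat{i}^{-1/3}$ factor.

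Two caveats. First, the tail in your initial concentration step is written incorrectly: the log-density is of order $N^2$ in the energy, so a deviation of size $N^{-\kappa}$ in the $H^{-1/2}$ metric (costing $N^{-2\kappa}$ in the quadratic form) has probability of order $e^{-cN^{2-2\kappa}}$, not $e^{-cN^{2\kappa}}$; the subexponential conclusion survives, but the exponent as stated is wrong. Second, the argument actually carried out in \cite{BEYII} for one-cut, non-convex, off-critical potentials is not a pure self-consistent-equation bootstrap: it relies on comparison with a convexified reference ensemble and a hierarchy of conditioned local equilibrium measures, with the loop-equation analysis entering to upgrade an intermediate $O(N^{-1+\xi})$ counting estimate to the optimal accuracy. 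Your outline is closer to later streamlined treatments of local laws for $\beta$-ensembles; both routes reach the same endpoint, but neither is spelled out (nor needs to be) in the present paper, and filling in either in full would be a substantial undertaking rather than a lemma-level verification.
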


\noindent We will use the following lemma quite heavily in what proceeds.
\begin{lem}\label{regions}
\noindent 
Let $\gamma_i$ and $\hat{i}$ be as in Theorem \ref{thm::BEYrigid}, let $t\in [0;1]$, and let $\lambda_i$, $i \in \llbracket 1,  N \rrbracket$, be a configuration  of points such that $|\lambda_i - \gamma_i| \leq N^{-2/3 + \xi} \, \hat{i}^{-1/3}$ for $0 < \xi < (1 - \alpha) \wedge \frac{2}{3}$, and let $M > 1$ be a constant.  Define the pairwise disjoint sets:
\begin{align}
J_1 &:= \{ i \in \llbracket 1; N\rrbracket,\, |N^\alpha(\gamma_i - E)| \leq 2M\}, \label{def::J1} \\
J_2 &:= \left \{ i\in J_1^c, | (\gamma_i - E)|  \leq \frac{1}{2} (E-a)\wedge (b-E)\right\}, \label{def::J2}\\
J_3 &:=  J_1^c \cap J_2^c.
\end{align}
The following statements hold:
\begin{enumerate}[label=(\alph*)]
\item For all $i \in  J_1 \cup J_2$ ,  $\hat{i}\geq CN$, for some $C > 0$ that depend only on $\mu_V$ in a neighborhood of $E$, also for all such $i$, $|\gamma_i - \gamma_{i+1}|\leq \frac{C}{N}$ for a constant $C > 0$ depending only on $\mu_V$ in a neighborhood of $E$. \label{regionsHati}
\item Uniformly in all $i \in J_1^c = J_2 \cup J_3 $ and all $t\in [0;1]$,\
\begin{equation}
\label{eqn::outofsupport}
|N^\alpha t (\lambda_i - \gamma_i) + N^\alpha (\gamma_i - E)| > M + 1,
\end{equation}
for large enough $N$. Furthermore, the statement holds true uniformly in $x\in [\gamma_i,\gamma_{i+1}]$ when we substitute $\gamma_i$ by $x$. \label{regionsOutsupp}
\item The cardinality of $J_1$ is of order $CN^{1-\alpha}$, where again, $C > 0$ depends only on $\mu_V$ in a neighborhood of $E$. \label{regionsCard}
\end{enumerate}
\end{lem}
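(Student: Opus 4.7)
The plan is to derive all three assertions by combining (i) the off-criticality hypothesis, which ensures $\rho_V$ is continuous and bounded below by some $\rho_\ast>0$ on a fixed compact neighborhood $K\subset(a,b)$ of $E$; (ii) the deterministic rigidity bound in force throughout the lemma; and (iii) the elementary identities $i/N=\mu_V([a,\gamma_i])$ and $\int_{\gamma_i}^{\gamma_{i+1}}\rho_V(x)\,dx=1/N$.

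For part (a), I would first note that for $N$ large enough $2MN^{-\alpha}<c_0:=\tfrac12((E-a)\wedge(b-E))$, so indices in $J_1\cup J_2$ have $\gamma_i\in K$. Then $i/N=\mu_V([a,\gamma_i])$ is trapped in a compact subinterval of $(0,1)$, giving $\hat{i}\geq CN$, and integrating $\rho_V\geq\rho_\ast$ from $\gamma_i$ to $\gamma_{i+1}$ yields $|\gamma_{i+1}-\gamma_i|\leq 1/(\rho_\ast N)$.

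For part (b), the triangle inequality reduces the claim to showing
\[
N^\alpha|\gamma_i-E|-N^\alpha|\lambda_i-\gamma_i|>M+1
\]
uniformly over $J_1^c=J_2\sqcup J_3$. On $J_2$, part (a) gives $\hat{i}\geq CN$, so rigidity yields $N^\alpha|\lambda_i-\gamma_i|=O(N^{\alpha+\xi-1})=o(1)$ (using $\xi<1-\alpha$), which is absorbed by the lower bound $N^\alpha|\gamma_i-E|>2M$ inherent to $J_1^c$. On $J_3$ the displacement is macroscopic, $|\gamma_i-E|\geq c_0$, so the main term is $\gtrsim N^\alpha$ while the worst-case rigidity estimate $|\lambda_i-\gamma_i|\leq N^{-2/3+\xi}$ at $\hat{i}=1$ produces only an $O(N^{\alpha-2/3+\xi})$ correction, which is $o(N^\alpha)$ since $\xi<2/3$. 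The uniform extension to $x\in[\gamma_i,\gamma_{i+1}]$ is handled identically with the extra error $N^\alpha|x-\gamma_i|$, bounded by $O(N^{\alpha-1})$ on $J_2$ via (a), and by $O(N^{\alpha-2/3})$ on $J_3$ using the edge spacing forced by the square-root vanishing of $\rho_V$; both are negligible against their respective main terms.

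For part (c), $J_1$ is precisely the set of $i$ with $\gamma_i\in[E-2MN^{-\alpha},E+2MN^{-\alpha}]$, so $|J_1|=N\mu_V$ of that window up to $O(1)$, and continuity of $\rho_V$ at $E$ with $\rho_V(E)>0$ turns this into $|J_1|=4M\rho_V(E)N^{1-\alpha}(1+o(1))$. The only mildly delicate point is the $J_3$ case in (b), where rigidity is weakest near the edges; the saving grace is that the macroscopic lower bound on $|\gamma_i-E|$ dominates all rigidity and spacing errors regardless of how close $\gamma_i$ sits to $a$ or $b$.
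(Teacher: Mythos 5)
Your proposal is correct and follows essentially the same route as the paper's proof: part (a) from $\gamma_i$ lying in a fixed compact subinterval of $(a,b)$ where $\rho_V$ is bounded below, part (b) by triangle inequality and separately treating $J_2$ (using $\hat{i}\geq CN$ and $\xi<1-\alpha$) and $J_3$ (using $\xi<2/3$ and edge spacing $O(N^{-2/3})$), and part (c) by comparing $|J_1|/N$ to the $\mu_V$-mass of the window $\{|x-E|\leq 2MN^{-\alpha}\}$. The only cosmetic difference is that you state the sharper asymptotic $|J_1|\sim 4M\rho_V(E)N^{1-\alpha}$ in (c) where the paper only records two-sided bounds.
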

\begin{proof}
The first part of statement \ref{regionsHati} holds by the observation that for $i \in J_1 \cup J_2$, $\gamma_i$ is in the bulk, so 
\begin{equation*}
0 < c \leq  \int_a^{\gamma_i} d\mu_V(x) = \frac{i}{N} \leq C < 1
\end{equation*}
for constants $C$, $c > 0$ depending only on $\mu_V$. For the second part of statement \ref{regionsHati}, the density of $\mu_V$ is bounded below uniformly in $i \in J_1\cup J_2$, so
\begin{equation*}
c |\gamma_i - \gamma_{i+1}| \leq \int_{\gamma_i}^{\gamma_{i+1}} d \mu_V(x) = \frac{1}{N}.
\end{equation*}

\noindent Statement \ref{regionsOutsupp} can be seen as follows: consider $i \in J_2$, on this set $\hat{i} \geq CN$ by \ref{regionsHati}, so uniformly in such $i$, $N^\alpha |\lambda_i - \gamma_i| \leq C N^{\alpha -1 + \xi}$, which goes to zero, while $N^\alpha|\gamma_i - E| > 2M$. On the other hand, for $i \in J_3$, we have $N^\alpha|\gamma_i -E| > \frac{1}{2}N^\alpha (E-a)\wedge (b-E)$, which goes to infinity faster than $N^\alpha|\lambda_i - \gamma_i| \leq N^{\alpha -\frac{2}{3}+ \xi}$, by our choice of $\xi$. When we substitute $\gamma_i$ by $x$, the same argument holds because $N^\alpha|x - \gamma_i| \leq N^\alpha |\gamma_i - \gamma_{i+1}|$, which is of order $N^{\alpha-1}$ on $J_2$  (as we showed in statement \ref{regionsHati}) and of order  $C N^{\alpha - \frac{2}{3}}$ on $J_3$. 

\noindent Statement \ref{regionsCard} follows by the observation that on the set $x\in [a,b]$ such that $|x - E| \leq \frac{2M}{N^\alpha}$ the density of $\mu_V$ is bounded uniformly above and below, so 
\begin{equation*}
 \frac{c}{N^\alpha} \leq \int_{|x- E| \leq \frac{2M}{N^\alpha}} d \mu_V(x) = \sum_{i \in J_1} \int_{\gamma_i}^{\gamma_{i+1}}d \mu_V(x) + O\left(\frac{1}{N}\right) \leq \frac{C}{N^\alpha},
\end{equation*}
giving the required result.
\end{proof}

\noindent The rigidity of eigenvalues, \Cref{thm::BEYrigid}, along with the previous Lemma leads to the following estimates

\begin{lem}\label{controle}
\noindent For all $0 < \xi < (1-\alpha)\wedge \frac{2}{3}$ there exists constants $C$, $c >0$ such that for $N$ large enough  we have the concentration bounds
\begin{equation}\label{controle1}
\mathbb{P}_V^N \big(|M_N(\tilde{f})| \geq CN^{\xi }  \norm{f}_{C^1(\mathbb{R})}\big) \leq e^{-N^c} \  ,  
\end{equation}
\begin{equation}\label{controle11}
\mathbb{P}_V^N \big(|M_N(\Xi^{-1}(\tilde{f})' )| \geq CN^{\alpha + \xi }  \norm{f}_{C^1(\mathbb{R})}\big) \leq e^{-N^c}  \  ,
\end{equation}
\begin{equation}\label{controle111}
\mathbb{P}_V^N \big(|M_N(\Xi^{-1}(\tilde{f})\tilde{f}' )| \geq CN^{\alpha  + \xi }  \norm{f}_{C^1(\mathbb{R})}\big) \leq e^{-N^c} \ .
\end{equation}
\end{lem}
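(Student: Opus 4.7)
The plan is to reduce all three estimates to a single deterministic scheme carried out on the high-probability rigidity event
\[
\Omega_\xi = \bigcap_{i=1}^N\big\{|\lambda_i - \gamma_i|\le N^{-2/3+\xi}\hat{i}^{-1/3}\big\},
\]
whose complement has probability $\le e^{-N^c}$ by \Cref{thm::BEYrigid}. On $\Omega_\xi$ I would split, for any test function $g$,
\begin{equation*}
M_N(g) \;=\; \underbrace{\sum_{i=1}^{N}\bigl(g(\lambda_i)-g(\gamma_i)\bigr)}_{(\mathrm{I})} \;+\; \underbrace{\sum_{i=1}^{N} g(\gamma_i) \;-\; N\!\int g\,d\mu_V}_{(\mathrm{II})},
\end{equation*}
estimate (I) by Taylor's formula $g(\lambda_i)-g(\gamma_i)=(\lambda_i-\gamma_i)\int_0^1 g'\bigl(\gamma_i+t(\lambda_i-\gamma_i)\bigr)dt$ combined with the rigidity bound, and estimate (II) as a Riemann-sum error: writing $N\int g\,d\mu_V=\sum_i N\int_{\gamma_i}^{\gamma_{i+1}}g(x)d\mu_V(x)$ (plus a negligible boundary term), each summand contributes $N\int_{\gamma_i}^{\gamma_{i+1}}|x-\gamma_i||g'(\eta_i(x))|\,d\mu_V(x)\le C N^{-1}|g'(\eta_i)|$ by \Cref{regions}\ref{regionsHati}. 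In both terms only $\|g'\|$ along the trajectories $\gamma_i\to\lambda_i$ and $\gamma_i\to x\in[\gamma_i,\gamma_{i+1}]$ enters, and by \Cref{regions}\ref{regionsOutsupp} this trajectory always stays in the ``outer'' region $\{N^\alpha|x-E|\ge M+1\}$ when $i\in J_1^c$, which is precisely where \eqref{borneexsup} applies.

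For \eqref{controle1} the function $\tilde{f}$ is supported in $[E-M/N^\alpha,E+M/N^\alpha]$, so by \Cref{regions}\ref{regionsOutsupp} both $\tilde{f}(\lambda_i)$ and $\tilde{f}(\gamma_i)$ vanish when $i\in J_1^c$ and only the $J_1$ contribution survives. Using $|J_1|\le CN^{1-\alpha}$ by \Cref{regions}\ref{regionsCard}, $\|\tilde f'\|_\infty\le N^\alpha\|f\|_{C^1}$, $|\lambda_i-\gamma_i|\le N^{-1+\xi}$ on $J_1$, and $|\gamma_{i+1}-\gamma_i|\le C/N$, a direct count gives $|(\mathrm{I})|+|(\mathrm{II})|\le CN^\xi\|f\|_{C^1}$. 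The estimate \eqref{controle111} is analogous since $\Xi^{-1}(\tilde f)\,\tilde f'$ is also supported in the same window; here one uses $\|\Xi^{-1}(\tilde f)\|_{C^1}\le CN^\alpha\log N$ from \eqref{bornesup}, producing $\|g'\|_\infty\le CN^{2\alpha}\log N\|f\|_{C^1}$ on $J_1$ and yielding $CN^{\alpha+\xi}\|f\|_{C^1}$ (after absorbing $\log N$ into a slightly larger $\xi$).

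The main work is \eqref{controle11}, because $g=\Xi^{-1}(\tilde f)'$ is not compactly supported and one must exploit the polynomial decay \eqref{borneexsup}. I would split the sums according to the partition $\{J_1,J_2,J_3\}$. On $J_1$ apply the crude global bound $\|g'\|_\infty=\|\Xi^{-1}(\tilde f)''\|_\infty\le CN^{2\alpha}\log N$ from \eqref{bornesup}, giving the contribution $CN^{1-\alpha}\cdot N^{2\alpha}\log N\cdot N^{-1+\xi}=CN^{\alpha+\xi}\log N$. On $J_1^c$ \Cref{regions}\ref{regionsOutsupp} places the evaluation points at distance $\ge(M+1)/N^\alpha$ from $E$, so \eqref{borneexsup} at $p=2$ yields $|g'(\eta)|\le C\bigl(N^\alpha((\eta-E)^3\wedge 1)\bigr)^{-1}$. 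Converting the sums to integrals via $\gamma_{i+1}-\gamma_i\asymp 1/(N\rho_V(\gamma_i))$, the $J_2$ contribution becomes $N^{-\alpha}\cdot N^{-1+\xi}\cdot\int_{2M/N^\alpha}^{C}u^{-3}\,Ndu\le CN^{\alpha+\xi}$, while the $J_3$ contribution, using $\sum\hat i^{-1/3}\le CN^{2/3}$, is $O(N^{-\alpha+\xi})$, negligible. The Riemann-sum piece (II) is estimated in the same way, and is smaller by a factor $N^{-\xi}$. Collecting everything gives $|M_N(g)|\le CN^{\alpha+\xi}\|f\|_{C^1}$ on $\Omega_\xi$, concluding the proof after adjusting $\xi$ to absorb logarithms.

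The only real obstacle is this bookkeeping in \eqref{controle11}: one must carefully verify that the trajectories used in the Taylor expansion remain in the regime where \eqref{borneexsup} holds (that is the point of \Cref{regions}\ref{regionsOutsupp} applying uniformly in $t\in[0,1]$ and uniformly on $[\gamma_i,\gamma_{i+1}]$), and that the divergent near-$E$ integral $\int u^{-3}du$ is cut off at the correct scale $1/N^\alpha$ so that the final power is $\alpha+\xi$ and not something worse.
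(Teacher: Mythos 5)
Your plan follows the paper's proof essentially step for step: restrict to the rigidity event $\Omega$, split $M_N$ into a "Taylor piece" and a "Riemann-sum piece," use \Cref{regions} to partition indices into $J_1, J_2, J_3$, apply \cref{bornesup} on $J_1$ and \cref{borneexsup} on $J_1^c$, and control the resulting sums. The one structural difference — keeping (I) and (II) separate rather than combining them into the single Taylor expansion $\Xi^{-1}(\tilde f)'(\lambda_i) - \Xi^{-1}(\tilde f)'(x)$ for $x\in[\gamma_i,\gamma_{i+1}]$ as the paper does for \cref{controle11} — is cosmetic, and your $J_3$ tail estimate via $\sum \hat{i}^{-1/3}\le CN^{2/3}$ matches the paper's.

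One point needs fixing in your sketch of \cref{controle111}. Writing $g = \Xi^{-1}(\tilde f)\,\tilde f'$, you have $g' = \Xi^{-1}(\tilde f)'\,\tilde f' + \Xi^{-1}(\tilde f)\,\tilde f''$, and $\|\tilde f''\|_\infty = N^{2\alpha}\|f''\|_\infty$. The stated bound \cref{bornesup} with $p=1$ only gives $\|\Xi^{-1}(\tilde f)\|_\infty \le C N^\alpha \log N$, so the second term would appear to be of order $N^{3\alpha}\log N$, not $N^{2\alpha}\log N$; feeding that into your $J_1$ count gives $CN^{2\alpha+\xi}\log N$, which overshoots the target $N^{\alpha+\xi}$. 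The rescue is the per-derivative estimate $\|\Xi^{-1}(\tilde f)^{(l)}\|_\infty \le C N^{l\alpha}\log N$ (so in particular $\|\Xi^{-1}(\tilde f)\|_\infty \le C\log N$), which is in fact what the proof of \Cref{bornebounds} establishes term by term even though the lemma is packaged as a bound on $\|\cdot\|_{C^p}$. With that finer bound the second term of $g'$ is $O(N^{2\alpha}\log N)$ and your conclusion is restored; you should invoke the per-derivative form explicitly rather than the packaged $C^1$ bound.
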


\begin{proof}
\noindent Let $M > 1$ such that $\supp f \subset  [-M,M]$ and fix $0 < \xi < (1- \alpha)\wedge \frac{2}{3} $. For the remainder of the proof, we may assume that we are on the event $\Omega := \big\{ \forall i \ , \  |\lambda_i - \gamma_i | \leq N^{-2/3 +\xi } \  \hat{i}^{-1/3} \big\}$. This follows from the fact that, for example,
\begin{multline*}
\mathbb{P}_N^V\left(|M_N(\tilde{f})| \geq CN^\xi \|f\|_{C^1(\mathbb{R})}\right) 
\leq \mathbb{P}_N^V\left( \left\{ |M_N(\tilde{f})| \geq C N^\xi\|f\|_{C^1(\mathbb{R})}\right\} \cap \Omega\right) + \mathbb{P}_N^V(\Omega^c) ,
\end{multline*}
and by \Cref{thm::BEYrigid}, we may bound $\mathbb{P}_N^V(\Omega^c)$ by $e^{-N^c}$ for some constant $c > 0$, and $N$ large enough. On $\Omega$, the $\lambda_i$ satisfy the conditions of \Cref{regions}, we will utilize the sets $J_1$, $J_2$, and $J_3$ as defined there.

\noindent We begin by controlling \eqref{controle1}. We have that
\begin{multline}
\label{controle1triang}
|M_N(\tilde{f})| =  \left|\sum_{i=1}^N f(N^\alpha(\lambda_i - E)) - N \mu_V(\tilde f) \right|\\
		 \leq \left|\sum_{i=1}^N f(N^\alpha(\lambda_i - E)) - \sum_{i=1}^N f(N^\alpha(\gamma_i - E)) \right|
			+ \left|\sum_{i=1}^N f(N^\alpha(\gamma_i - E)) - N \mu_V(\tilde f)\right|,
\end{multline}
the first term in \eqref{controle1triang} may be bounded (on $\Omega$) by
\begin{multline*}
\left| \sum_{i=1}^N f(N^\alpha (\lambda_i - E)) - \sum_{i=1}^N f(N^\alpha (\gamma_i - E)) \right| \\
= \left| \sum_{i=1}^N N^\alpha(\lambda_i - \gamma_i) \int_0^1 f'(tN^\alpha(\lambda_i - \gamma_i) + N^\alpha(\gamma_i -E)) dt \right|\\
\leq  \sum_{i=1}^N N^{\alpha - 2/3 + \xi}\, \hat{i}^{-1/3} \int_0^1 |f'(tN^\alpha(\lambda_i - \gamma_i) + N^\alpha(\gamma_i -E))| dt ,
\end{multline*}
By \Cref{regions} \ref{regionsOutsupp}, for $N$ large enough, we have
\begin{multline}
\int_0^1 \sum_{i=1}^N N^{\alpha - 2/3 + \xi}\, \hat{i}^{-1/3}  |f'(tN^\alpha(\lambda_i - \gamma_i) + N^\alpha(\gamma_i -E))| dt \\
= \int_0^1 \sum_{i \in J_1 } N^{\alpha - 2/3 + \xi}\, \hat{i}^{-1/3}  |f'(tN^\alpha(\lambda_i - \gamma_i) + N^\alpha(\gamma_i -E))| dt \\
\leq  \sum_{i\in J_1 } N^{\alpha - 1 + \xi}\|f\|_{C^1(\mathbb{R})} \leq C N^{\xi} \|f\|_{C^1(\mathbb{R})},
\end{multline}
where, in the third line we used \Cref{regions} \ref{regionsHati} and \ref{regionsCard} in order. Thus 
\begin{equation*}
\begin{split}
\left| \sum_{i=1}^N f(N^\alpha (\lambda_i - E)) - \sum_{i=1}^N f(N^\alpha (\gamma_i - E)) \right| \leq C  N^{\xi} \norm{f}_{C^1(\mathbb{R})}.
\end{split}
\end{equation*}

\noindent For the second term in \eqref{controle1triang}, 
\begin{multline*}
\left| \sum_{i=1}^N f(N^\alpha (\gamma_i - E)) -  N  \int_a^b f(N^\alpha(x-E)) d\mu_V(x)\right| \\
\leq  N  \sum_{i\in J_1} \int_{\gamma_i}^{\gamma_{i+1}} \big| f(N^\alpha (\gamma_i - E)) - f(N^\alpha(x-E))    \big| d\mu_V(x) \\
\leq  N^{1+\alpha} \norm{f}_{C^1(\mathbb{R})}  |J_1| \sup_{i \in J_1} (\gamma_{i+1} - \gamma_i)\int_{\gamma_i}^{\gamma_{i+1}}  d\mu_V(x) \leq  C  \norm{f}_{C^1(\mathbb{R})} 
\end{multline*}
since the spacing  of the quantiles in $J_1$ is bounded by $\frac{C}{N}$. This proves \eqref{controle1}. 

\noindent We now proceed with the proof of \eqref{controle11}. 
\begin{equation*}
\begin{split}
 \Big |M_N(\Xi^{-1}(\tilde{f})' )\Big|  &= \Big| \sum_{i=1}^N \Big( \Xi^{-1}(\tilde{f})' (\lambda_i) - N  \int_{\gamma_i}^{\gamma_{i+1}} \Xi^{-1}(\tilde{f})' (x) d\mu_V(x)  \Big) \Big|  \\
 &\leq  N   \sum_{i = 1}^N \int_{\gamma_i}^{\gamma_{i+1}} \Big| \Xi^{-1}(\tilde{f})' (\lambda_i) -  \Xi^{-1}(\tilde{f})' (x) \Big| d\mu_V(x)\\ 
 &\leq  N \sum_{i=1}^N \int_{\gamma_i}^{\gamma_{i+1}} \int_0^1 |\lambda_i - x| \left|\Xi^{-1}(\tilde{f})^{(2)} (t (\lambda_i - x) + x )\right|\, dt \, d\mu_V(x) ,\\
 \end{split}
\end{equation*}
Recall from the proof of Lemma \ref{regions} that uniformly in $i \in J_2$ and $x\in [\gamma_i,\gamma_{i+1}]$, $|\gamma_i - E| \geq \frac{2M}{N^\alpha}$ while $|x - \gamma_i| \leq \frac{C}{N}$; further, $|\lambda_i - x| \leq C N^{-1 + \xi}$ so for $N$ large enough we can replace $|t(\lambda_i - x) + (\gamma_i - E)|$ by $|\gamma_i - E|$ uniformly in $t\in [0;1]$. Likewise, uniformly in $i \in J_3$ and $x\in [\gamma_i, \gamma_{i+1}]$,  $| \gamma_i - E| \geq C$ while $|x - \gamma_i| \leq CN^{-\frac{2}{3}}$; further $|\lambda_i - x| \leq CN^{-1 + \xi}$ so for $N$ large enough we can replace $|t(\lambda_i - x) + (\gamma_i - E)|$ by a constant $C$ uniformly in $t\in [0;1]$ for what follows.

\noindent For $i \in J_2$, by the observations in the previous paragraph, along with \Cref{regions} \ref{regionsOutsupp}, \Cref{bornebounds} \cref{borneexsup},  and \Cref{regions} \ref{regionsHati},
\begin{multline*}
N \sum_{i\in J_2} \int_{\gamma_i}^{\gamma_{i+1}}\int_0^1 |\lambda_i - x| \left|\Xi^{-1}(\tilde{f})^{(2)} (t (\lambda_i - x) + x )\right|\, dt d\mu_V(x) \\
\leq N \sum_{i \in J_2}\int_{\gamma_i}^{\gamma_{i+1}} \int_0^1 \frac{C |\lambda_i - x| }{N^\alpha (| t(\lambda_i - x) + x - E|^3 \wedge 1)} dt d\mu_V(x) \leq \sum_{i \in J_2}   \frac{C N^{\xi -1 - \alpha} }{(\gamma_i - E)^3} ,
\end{multline*}

\noindent The same reasoning for $i \in J_3$ yields
\begin{equation*}
N \sum_{i\in J_3} \int_{\gamma_i}^{\gamma_{i+1}}\int_0^1 |\lambda_i - x| \left|\Xi^{-1}(\tilde{f})^{(2)} (t (\lambda_i - x) + x )\right|\, dt \, d\mu_V(x) \leq \sum_{i \in J_3} C N^{\xi - \alpha - \frac{2}{3}}\, \hat{i}^{-\frac{1}{3}}.
\end{equation*}

\noindent For $i \in J_1$, by \Cref{bornebounds} \cref{bornesup} and \Cref{regions} \ref{regionsHati},
\begin{multline*}
N \sum_{i\in J_1} \int_{\gamma_i}^{\gamma_{i+1}}\int_0^1 |\lambda_i - x| \left|\Xi^{-1}(\tilde{f})^{(2)} (t (\lambda_i - x) + x )\right|\, dt \, d\mu_V(x) \\
\leq N \sum_{i \in J_1} \int_{\gamma_i}^{\gamma_{i+1}} C N^{2\alpha} \log N |\lambda_i - x| d\mu_V(x)  \leq \sum_{i \in J_1} C N^{2\alpha + \xi - 1} \log N.
\end{multline*}
It follows that
\begin{equation*}
\begin{split}
 \left|M_N(\Xi^{-1}(\tilde{f})' )\right|  & \leq  \sum_{i\in J_1} C N^{2\alpha + \xi - 1} \log N + \sum_{i \in J_2} \frac{CN^{\xi -1 -\alpha}}{(\gamma_i - E)^3} + \sum_{i \in J_3} C N^{\xi - \alpha - \frac{2}{3}} \hat{i}^{\frac{1}{3}}\\
 &\leq  C N^{\alpha + \xi} \log N  +  C N^{\xi + \alpha} \leq CN^{\alpha + \xi} \log N,
\end{split}
\end{equation*}
where we have used $|J_1| \leq C N^{1-\alpha}$ from \Cref{regions}, and the following estimates:
\begin{align*}
\sum_{i \in J_2}\frac{N^{\xi - \alpha- 1}}{(\gamma_i - E)^3} &\leq CN^{\xi - \alpha} \left( \int_{a}^{E - \frac{2M}{N^\alpha}}\frac{dx}{(x - E)^3} + \int_{E+\frac{2M}{N^\alpha}}^b \frac{dx}{(x-E)^3}\right) \leq C N^{\xi + \alpha},\\
C N^{\xi - \alpha - \frac{2}{3}} \sum_{i \in J_3} \hat{i}^{-\frac{1}{3}} &\leq  C N^{\xi - \alpha} \times \frac{1}{N}\sum_{i = 1}^N\left(\frac{i}{N}\right)^{-\frac{1}{3}} \leq C N^{\xi - \alpha}.
\end{align*}
\noindent This proves \eqref{controle11}. The bound \eqref{controle111} is obtained in a similar way and we omit the details.
\end{proof}

\noindent For convenience we introduce the following notation: for a sequence of random variable $(X_N)_{N\in \mathbb{N}}$ we write  $X_N = \omega(1)$ if there exists  constants $c$,  $C$ and $\delta > 0$ such that the bound $ | X_N | \leq \frac{C}{N^\delta}$ holds with probability greater  than $1 - e^{-N^c}$.

\noindent Using  Lemma \ref{controle} we  prove the following bounds:

\begin{lem}\label{controle2}
The  following estimates hold:
\begin{equation}\label{mean}
L_N \Big( \Xi^{-1}(\tilde{f})' \Big) = \omega(1) \ , 
\end{equation}

\begin{equation}\label{variance}
L_N \Big(\Xi^{-1}(\tilde{f}) \tilde{f}'  \Big) + \sigma_f^2 =  \omega(1)  \ , 
\end{equation}

\begin{equation}\label{cross}
\frac{1}{N }   \iint  \frac{\Xi^{-1}(\tilde{f})(x)-\Xi^{-1}(\tilde{f})(y)}{x-y} dM_N(x) dM_N(y)    = \omega(1) \ . 
\end{equation}
\end{lem}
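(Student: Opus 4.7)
My strategy for all three bounds is to write each expression as a deterministic mean (an integral against $\mu_V$) plus a fluctuation controlled by \Cref{controle}: the fluctuation is always $\omega(1)$ once $\xi$ is chosen small (say $0 < \xi < (1-\alpha) \wedge \frac{2}{3}$), while the mean requires a direct analysis based on the explicit inversion formula for $\Xi^{-1}$.

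\emph{For \eqref{mean}}, decompose $L_N(\Xi^{-1}(\tilde f)') = \mu_V(\Xi^{-1}(\tilde f)') + \frac{1}{N} M_N(\Xi^{-1}(\tilde f)')$; by \eqref{controle11} the fluctuation is bounded by $C N^{\alpha + \xi - 1} = \omega(1)$. For the mean, an integration by parts (with vanishing boundary terms, since $\rho_V$ vanishes at $a$ and $b$) gives $\mu_V(\Xi^{-1}(\tilde f)') = -\int \Xi^{-1}(\tilde f)(x)\, \rho_V'(x)\, dx$. Bounding $|\Xi^{-1}(\tilde f)(x)|$ by $C \log N$ in the bulk and by $C / (N^\alpha (|x-E| \wedge 1))$ for $N^\alpha |x - E| \geq M+1$ (the $p = 0$ analogues of \eqref{bornesup} and \eqref{borneexsup}, obtained by the same argument) yields $|\mu_V(\Xi^{-1}(\tilde f)')| \leq C N^{-\alpha} \log N = \omega(1)$.

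\emph{For \eqref{variance}}, \eqref{controle111} handles the fluctuation identically. For the mean, change variables $u = N^\alpha(x - E)$ so that the factor $\tilde f'(x)$ localises the integral to $|u| \leq M$, and use the standard identity $\mathrm{PV}\int_a^b dy / (\sigma(y)(y - x)) = 0$ for $x \in (a, b)$ to recognise $\Xi^{-1}(\tilde f)$ at leading order as a (scaled) Hilbert transform of $f$. Taylor-expanding $S$ and $\sigma$ around $E$ (errors of size $O(N^{-\alpha})$) gives
\begin{equation*}
\mu_V(\Xi^{-1}(\tilde f)\, \tilde f') = -\frac{1}{\beta \pi^2} \iint \frac{f(v)\, f'(u)}{v - u}\, du\, dv + O(N^{-\alpha} \log N),
\end{equation*}
and Plancherel (using $\widehat{Hf}(\xi) = -i\, \mathrm{sgn}(\xi)\, \hat f(\xi)$ together with the identity $\iint ((f(x) - f(y))/(x - y))^2\, dx\, dy = \int |\xi|\, |\hat f(\xi)|^2\, d\xi$) identifies the leading term as $-\sigma_f^2$.

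\emph{For \eqref{cross}}, set $\phi(x, y) = (\Xi^{-1}(\tilde f)(x) - \Xi^{-1}(\tilde f)(y))/(x - y)$, $\Phi(x) = \int \phi(x, y)\, d\mu_V(y)$, and introduce the recentered kernel $\psi(x, y) := \phi(x, y) - \Phi(x) - \Phi(y) + \mu_V(\Phi)$, which satisfies $\int \psi(x, y)\, d\mu_V(y) = 0$ for every $x$. A direct expansion yields $\iint \phi\, dM_N\, dM_N = \sum_{i, j} \psi(\lambda_i, \lambda_j)$. On the rigidity event, replace each $\lambda_i$ by the quantile $\gamma_i$ and approximate the resulting double sum by $N^2 \iint \psi\, d\mu_V\, d\mu_V = 0$ via a two-dimensional Riemann-sum argument; the errors are controlled by splitting both indices into $J_1, J_2, J_3$ of \Cref{regions} and using the derivative estimates of \Cref{bornebounds}, in the spirit of the proof of \eqref{controle11}. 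This last step is the main obstacle: $\psi$ has derivatives of order $N^{k\alpha} \log N$ near the diagonal $\{x \approx y \approx E\}$ but decays away from it, so one must combine the mean-zero property of $\psi$ in each variable with a careful case-by-case analysis of pairs $(i, j) \in J_a \times J_b$, $a, b \in \{1, 2, 3\}$, to extract the $\omega(1)$ gain after division by $N$.
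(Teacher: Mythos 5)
Your treatments of \eqref{mean} and \eqref{variance} are correct but take genuinely different routes from the paper. For \eqref{mean}, you integrate by parts and then invoke pointwise $p=0$ analogues of the bounds in \Cref{bornebounds}; the paper instead inserts the explicit inversion formula, changes variables to $(u,v)=(N^\alpha(x-E),N^\alpha(y-E))$, and splits the resulting double integral over $B_N^2$, $B_N\times(A_N\setminus B_N)$, $(A_N\setminus B_N)\times B_N$. Both give $O(N^{-\alpha}\log N)$. For \eqref{variance}, you identify the leading term as a Hilbert-transform pairing and use Plancherel to obtain $-\sigma_f^2$, whereas the paper uses the real-variable identity $\tfrac12\partial_x(\tilde f(x)-\tilde f(y))^2 = \tilde f'(x)(\tilde f(x)-\tilde f(y))$ together with $\partial_x(\sigma(x)/(x-y))$ and integration by parts, arriving at the expression for $\sigma_f^2$ without Fourier analysis. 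Your constant comes out correctly (the Tricomi principal-value identity you cite is valid, and $\iint f(v)f'(u)/(v-u)\,du\,dv = \tfrac12\iint((f(x)-f(y))/(x-y))^2\,dx\,dy$ as can be verified on the Fourier side). Either route is fine, though the paper's is arguably more elementary.

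For \eqref{cross}, however, there is a genuine gap. Your recentering $\psi(x,y)=\phi(x,y)-\Phi(x)-\Phi(y)+\mu_V(\Phi)$ and the identity $\iint\phi\,dM_N\,dM_N=\sum_{i,j}\psi(\lambda_i,\lambda_j)$ are both correct, and you correctly anticipate that one must split both indices over $J_1,J_2,J_3$ and use \Cref{bornebounds}. But you then explicitly punt on what you yourself call the main obstacle, and the obstacle is real: on the pair $(J_2,J_2)$, the natural Taylor-remainder representation (involving $\Xi^{-1}(\tilde f)^{(3)}$ evaluated at a point between $x$ and $y$) fails because that intermediate point can lie near $E$, where the third derivative is of size $N^{3\alpha}\log N$; the exterior bound \eqref{borneexsup} cannot be applied uniformly over $i_1,i_2\in J_2$. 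Moreover, you cannot simply take absolute values inside $M_N^{(2)}$, since it is a signed measure. The paper explicitly flags this and handles $(2,2)$ (and similarly $(3,3)$) by an entirely different computation: it abandons the Taylor remainder, inserts the explicit integral formula \eqref{invint}, performs an algebraic decomposition of the difference quotient, and exploits cancellation in the signed measure $M_N^{(2)}$ through Lipschitz estimates on the induced kernels. Your sketch, as written, does not supply this cancellation mechanism, so the $\omega(1)$ claim for \eqref{cross} is not established.
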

\begin{proof}
For both \eqref{mean} and \eqref{variance}, we use 
\begin{align*}
L_N \Big( \Xi^{-1}(\tilde{f})' \Big) &= \frac{M_N(\Xi^{-1}(\tilde{f})')}{N} + \mu_V(\Xi^{-1}(\tilde{f})'),\\
L_N \left(\Xi^{-1}(\tilde{f}) \tilde{f}'  \right) &=  \frac{ M_N \left(\Xi^{-1}(\tilde{f}) \tilde{f}'  \right)}{N} + \mu_V\left( \Xi^{-1}(\tilde{f}) \tilde{f}' \right),
\end{align*}
\Cref{controle} implies that the first term in both equations are $\omega(1)$ so \eqref{mean} and \eqref{variance} simplify to deterministic statements about the speed of convergence of the integrals against $\mu_V$ above. 

\noindent To show \eqref{mean}, integration by parts yields: 
\begin{equation*}
\int (\Xi^{-1}\tilde{f})'(x) d\mu_{V}(x) =  - \int_{a}^{b} (\Xi^{-1} \tilde{f})(x) (S'(x) \sigma(x) + S(x) \sigma'(x) )dx,
\end{equation*}
inserting the formula for $\Xi^{-1} \tilde{f}$ we obtain
\begin{equation*}
\left|\int (\Xi^{-1}\tilde{f})'(x) d\mu_{V}(x)\right| \leq \frac{1}{\beta \pi^2} \int_{a}^{b}\int_{a}^{b}\left| \frac{\tilde{f}(x) - \tilde{f}(y)}{y-x}\right| \left(\left| \frac{S'(x) \sigma(x)}{S(x) \sigma(y)}\right| + \left|\frac{\sigma'(x)}{\sigma(y)}\right| \right)dxdy \ .
\end{equation*}
Recall that $S$ is bounded below on $[a,b]$, $S'$ is bounded above on $[a,b]$, further, up to a constant, $\frac{\sigma'(x)}{\sigma(y)}$ can be bounded above by $(\sigma(x)\sigma(y))^{-1}$. We define the sets
\begin{align*}
A_N &:= [ N^\alpha(a-E) ; N^\alpha(b-E)] ,\\
B_N &:= \left[\frac{1}{2} N^\alpha(a-E) ;\frac{1}{2} N^\alpha(b-E)\right].
\end{align*}

\noindent By the observations above, and the change  of variable $u= N^\alpha (x-E)$ and  $v= N^\alpha (y-E)$ we get 
\begin{multline}
\label{meanupbound}
\left|\int (\Xi^{-1}\tilde{f})'(x) d\mu_{V}(x)\right| \\
\leq \frac{C}{N^\alpha} \iint_{A_N^2}\left| \frac{{f}(u) - {f}(v)}{u-v}\right| \left(\frac{ \sigma(E+\frac{u}{N^\alpha})}{ \sigma(E+ \frac{v}{N^\alpha})} + \frac{1}{\sigma(E+ \frac{u}{N^\alpha}) \sigma(E+\frac{v}{N^\alpha})}\right)dudv.
\end{multline}
For large enough $N$, on the set $(u,v) \in (A_N\backslash B_N)^2$, the function $|f(u) - f(v)|$ is always zero, thus the integral on the right above can be divided into integrals over the sets:
\begin{equation}
\label{meansplit}
(A_N \times A_N) \cap (A_N \backslash B_N \times A_N \backslash B_N)^c = B_N\times B_N \cup B_N \times (A_N \backslash B_N) \cup (A_N \backslash B_N) \times B_N.
\end{equation}

\noindent  We bound the integral in \eqref{meanupbound} over each set in \eqref{meansplit}. We begin with the first set in \eqref{meansplit}. For $(u,v) \in B_N\times B_N$, $\sigma(E + \frac{u}{N^\alpha})$ and $\sigma(E + \frac{v}{N^\alpha})$ are uniformly bounded above and below. Therefore, the integral in \eqref{meanupbound} can be bounded in this region by 
\begin{multline*}
\iint_{B_N^2} \left|\frac{f(u) - f(v)}{u-v} \right|du dv   \\
=\iint_{[-M;M]^2}\left|\frac{f(u) - f(v)}{u - v} \right| du dv + 2 \int_{-M}^M \int_{ B_N \cap\{|u| \geq M\}} \left|\frac{f(v)}{u-v}\right| \,du \,dv,
\end{multline*}
the integral over $[-M;M]^2$ exists by the differentiability of $f$, while:
\begin{multline*}
\int_{-M}^M \int_{ B_N \cap\{|u| \geq M\}} \left|\frac{f(v)}{u-v}\right| \,du \,dv   \leq C\int_{-M}^M |f(v)| \log [N|v + M||v - M|] \,dv \leq C\log N,
\end{multline*}
for $N$ large enough.

\noindent For the second set in \eqref{meanupbound}, observe that for $(u,v)\in B_N \times (A_N \backslash B_N)$, $f(v)$ is 0 for $N$ sufficiently large, and $\sigma(E + \frac{u}{N^\alpha})$ is bounded uniformly above and below while $f(u)$ is 0 outside $[-M;M]$. This implies that the integral in \eqref{meanupbound} can be bounded in this region by
\begin{multline*}
 \int_{A_N \backslash B_N } \int_{-M}^M \left| \frac{f(u)}{u-v}\right| \left( \frac{ \sigma(E+\frac{u}{N^\alpha})}{ \sigma(E+ \frac{v}{N^\alpha})} + \frac{1}{\sigma(E+ \frac{u}{N^\alpha}) \sigma(E+\frac{v}{N^\alpha})} \right)dudv \\ 
\leq \frac{C\|f\|_{C(\mathbb{R})}}{N^\alpha}   \int_{A_N \backslash B_N } \frac{1}{ \sigma(E+ \frac{v}{N^\alpha})} dv  \leq C \ , 
\end{multline*}
where in the final line we used $|u-v| \geq c N^\alpha$ for $u \in [-M ; M]$ , $v \in A_N \backslash B_N$. 

\noindent We can do similarly for the third set in \eqref{meanupbound} and putting together these bounds on the right hand side of \eqref{meanupbound} gives
\begin{equation*}
\left|\int (\Xi^{-1}\tilde{f})'(x) d\mu_{V}(x)\right| \leq \frac{C\log N}{N^\alpha},
\end{equation*}
which is $\omega(1)$ as claimed.

\noindent We continue with \eqref{variance}. Recall that we reduced this problem to computing the limit of $\mu_V ( \Xi^{-1}(\tilde{f}) \tilde{f}')$. Using the inversion formula we see that 
\begin{equation*}
\int \Xi^{-1} \tilde{f}(x) \tilde{f}'(x) d\mu_{V}(x) = - \frac{1}{\beta \pi^2}\int_a^b \int_a^b \frac{\sigma(x)\tilde{f}'(x)(\tilde{f}(x) - \tilde{f}(y))}{\sigma(y)(x - y)}dxdy  
\end{equation*}

\noindent Observe that 
\begin{align*}
\frac{1}{2}\partial_x (\tilde{f}(x) - \tilde{f}(y))^2 &= \tilde{f}'(x) ( \tilde{f}(x) - \tilde{f}(y)), \\
\partial_x \left(\frac{\sigma(x)}{x-y}\right) &= \frac{-\frac{1}{2}(a + b)(x + y) + ab + xy}{\sigma(x)(x-y)^2}. 
\end{align*}
Therefore, integration by parts yields
\begin{multline*}
\int \Xi^{-1} \tilde{f}(x) \tilde{f}'(x) d\mu_{V}(x)  =  - \frac{1}{2\beta \pi^2}\int_a^b \int_a^b \frac{\sigma(x)\partial_x(\tilde{f}(x) - \tilde{f}(y))^2}{\sigma(y)(x - y)}dxdy  \\
=  \frac{1}{ 2 \beta \pi^2}\int_a^b \int_a^b \left( \frac{\tilde{f}(x) - \tilde{f}(y)}{x - y} \right)^2 \left( \frac{ab+xy - \frac{1}{2}(a+b)(x+y)}{\sigma(x) \sigma(y)} \right) dxdy , 
\end{multline*}
By changing variables again to $(u,v) = (N^\alpha(x-E),N^\alpha(y-E))$  and observing that
\begin{equation*}
ab+xy - \frac{1}{2}(a+b)(x+y) = - \sigma(E)^2 + \frac{u + v}{N^\alpha}\left(\frac{a + b}{2} + E\right)  + \frac{uv}{N^{2\alpha}},
\end{equation*}
we obtain
\begin{multline}
\label{varprelim}
\int \Xi^{-1} \tilde{f}(x) \tilde{f}'(x) d\mu_{V}(x)  \\
=   - \frac{1}{ 2 \beta \pi^2}\iint_{A_N^2} \left( \frac{{f}(u) - {f}(v)}{u - v} \right)^2 \left( \frac{\sigma(E)^2 - \frac{u+v}{N^\alpha}(\frac{a+b}{2}+E) - \frac{uv}{N^{2\alpha}}}{\sigma(E +\frac{u}{N^\alpha})  \sigma(E+ \frac{v}{N^\alpha})} \right) dudv.
\end{multline}
As before, $(f(u) - f(v))^2$ is zero for all $(u,v) \in (A_N\backslash B_N)^2$ for large enough $N$, therefore  we split the above integral into the regions defined in \eqref{meansplit}.

\noindent Notice that uniformly in $u \in B_N$ 
\begin{equation*}
\frac{1}{\sigma(E +\frac{u}{N^\alpha})} = \frac{1}{\sigma(E)} + O\left(\frac{|u|}{N^\alpha}\right) \ ,
\end{equation*}
and further notice $(u+v)/N^\alpha$ and $uv/N^{2\alpha}$ are bounded uniformly by constants in the entire region $A_N\times A_N$ and converge pointwise to 0 for each $(u,v)$. 

\noindent Consequently the integral \eqref{varprelim} over the region $B_N\times B_N$ is:
\begin{multline}
\label{varBN2}
 \iint_{B_N^2} \left( \frac{{f}(u) - {f}(v)}{u - v} \right)^2 \left( \frac{\sigma(E)^2 - \frac{u+v}{N^\alpha}\left(\frac{a + b}{2} + E\right)  - \frac{uv}{N^{2\alpha}}}{\sigma(E +\frac{u}{N^\alpha})  \sigma(E+ \frac{v}{N^\alpha})} \right) dudv \\
  =  \iint_{B_N^2} \left( \frac{{f}(u) - {f}(v)}{u - v} \right)^2\left( 1 - \frac{u+v}{N^\alpha\sigma(E)^2}\left(\frac{a+b}{2} + E\right) - \frac{uv}{N^{2\alpha} \sigma(E)^2}\right) du dv  \\
  + O \left( \frac{1}{N^\alpha}  \iint_{B_N^2} \left( \frac{{f}(u) - {f}(v)}{u - v} \right)^2 (|u|  + |v|) du dv \right) \ ,
\end{multline}
the first term of \eqref{varBN2} is equal to,
\begin{equation*}
 \frac{1}{2\beta \pi^2} \iint \left(\frac{f(u) - f(v)}{u-v}\right)^2 du dv +  O\Big(\frac{1}{N^\alpha}\Big)
\end{equation*}
while the second term in \eqref{varBN2} can be written as
\begin{multline*}
 \iint_{B_N^2} \left( \frac{{f}(u) - {f}(v)}{u - v} \right)^2 (|u|  + |v|) du dv =  \iint_{[-M;M]^2} \left( \frac{{f}(u) - {f}(v)}{u - v} \right)^2 (|u|  + |v|) du dv \\
 + 2\int_{-M}^M \int_{B_N\cap \{|u| \geq M\}}\left( \frac{{f}(v)}{u - v} \right)^2 (|u|  + |v|) du dv,
\end{multline*}
the integral over $[-M;M]^2$ is finite by differentiability of $f$ while the second is bounded by
\begin{multline*}
\int_{-M}^M \int_{B_N \cap \{|u| \geq M\}} |f(v)|^2\left( \frac{1}{|u-v|} + \frac{2|v|}{|u-v|^2}\right) du dv \\
\leq  C \int_{-M}^M |f(v)|^2 \left(\frac{1}{|v - M|} + \frac{1}{|M+v|} + \log[N|v-M||v+M|]\right) \leq C \log N
\end{multline*}
since $\supp f \subset [-M,M]$.

\noindent In the region $(u,v) \in  B_N \times (A_N\backslash B_N)$, $\sigma( E+ \frac{u}{N^\alpha})$ is bounded above and below while, for $N$ large enough $f(v) = 0$, thus the integral over $B_N \times (A_N\backslash B_N)$ is bounded above by
\begin{multline*}
 \int_{A_N \backslash B_N} \int_{B_N} \left( \frac{{f}(u) - {f}(v)}{u - v} \right)^2 \left( \frac{1}{\sigma(E +\frac{u}{N^\alpha})  \sigma(E+ \frac{v}{N^\alpha})} \right) dudv  \\
\leq   \int_{A_N \backslash B_N} \int_{-M}^M \left( \frac{{f}(u)}{u - v} \right)^2 \frac{1}{\sigma(E+ \frac{v}{N^\alpha})} dudv 
  \leq   \frac{C}{N^{2\alpha}}  \int_{A_N \backslash B_N} \frac{1}{ \sigma(E+ \frac{v}{N^\alpha})} dv \leq \frac{C}{N^\alpha} \ ,
\end{multline*}
where in the second line we used $|u - v| \geq cN^\alpha$ for $u\in [-M;M]$ and $v\in A_N\backslash B$. By symmetry of the integrand in \eqref{varprelim} this argument extends to the region $(u,v) \in (A_N \backslash B_N) \times B_N$. 

\noindent Altogether, our bounds show 
\begin{equation*}
\int \Xi^{-1} \tilde{f}(x) \tilde{f}'(x) d\mu_{V}(x)  = - \frac{1}{2 \beta \pi^2} \iint \left(\frac{f(x) - f(y)}{x-y}\right)^2 dx dy + O\left( \frac{\log N}{N^\alpha}\right),
\end{equation*}
which shows \eqref{variance}.

\noindent We conclude by  proving \eqref{cross}. The proof will be similar to the proof of \Cref{controle}.  As in \Cref{controle} we may restrict our attention to the event $\Omega = \{\forall i: |\lambda_i - \gamma_i | \leq N^{-\frac{2}{3} + \xi} \hat{i}^{-\frac{1}{3}}\}$ by applying \Cref{thm::BEYrigid}.  Further, we use again the sets $J_1$, $J_2$, and $J_3$ defined in \Cref{regions}.

\noindent Define for $j\in \{1,2,3\}$: 
\begin{equation*}
M_N^{(j)} = \sum_{i \in J_j} \big( \delta_{\lambda_i} - N \mathbbm{1}_{[{\gamma_i},{\gamma_{i+1}}]} \mu_V\big) \, 
\end{equation*}
so that $M_N = M_N^{(1)} + M_N^{(2)} + M_N^{(3)}$. We can  write
\begin{multline*}
  \iint  \frac{\Xi^{-1}(\tilde{f})(x)-\Xi^{-1}(\tilde{f})(y)}{x-y} dM_N(x) dM_N(y)    \\
  = \sum_{1 \leq j_1 ,  j_2 \leq 3}   \iint  \frac{\Xi^{-1}(\tilde{f})(x)-\Xi^{-1}(\tilde{f})(y)}{x-y} dM_N^{(j_1)} (x) dM_N^{(j_2)} (y)
\end{multline*}
\noindent Integrating repeatedly for each $(j_1,j_2)$ yields: 
\begin{multline}\label{crosscont}
  \iint  \frac{\Xi^{-1}(\tilde{f})(x)-\Xi^{-1}(\tilde{f})(y)}{x-y} dM_N^{(j_1)} (x) dM_N^{(j_2)} (y) = \\
 N^2 \sum_{\substack{ i_1 \in J_{j_1} \\  i_2 \in J_{j_2}  }} 
 \int_{\gamma_{i_1}}^{\gamma_{i_1+1}} d\mu_V(x_1)
 \int_{\gamma_{i_2}}^{\gamma_{i_2+1}} d\mu_V(x_2)
 \int_T \,du \,dv \,dt  \Big\{(\lambda_{i_1} -x_1) (\lambda_{i_2} -x_2) t (1-t)\\
  \times \Xi^{-1}(\tilde{f})^{(3)} \big( tv(\lambda_{i_1} - x_1) + ut(x_2 - \lambda_{i_2}) + u(\lambda_{i_2} - x_2) + t(x_1 - x_2)+ x_2\big)\Big\} \\
\end{multline}
where $T = [0;1]^3$. We will bound \eqref{crosscont} for each pair  $(j_1,j_2)$.

\noindent \textbf{For $(j_1,j_2) = (1,1)$.} Recall by \Cref{regions} \ref{regionsCard} that $|J_1|\leq C N^{1-\alpha}$, and further from the proof of \Cref{regions} uniformly in $i\in J_1$, $|\lambda_i - x| \leq CN^{\xi-1}$ whenever $x\in [\gamma_i,\gamma_{i+1}]$. We use \eqref{crosscont}, \Cref{bornebounds} \cref{bornesup} to obtain the upper bound
\begin{multline*}
\iint  \frac{\Xi^{-1}(\tilde{f})(x)-\Xi^{-1}(\tilde{f})(y)}{x-y} dM_N^{(1)} (x) dM_N^{(1)} (y) \leq \\
 N^2 \sum_{\substack{i_1\in J_1 \\ i_2 \in J_1}}\int_{\gamma_{i_1}}^{\gamma_{i_1 + 1}}\int_{\gamma_{i_2}}^{\gamma_{i_2+1}} N^{3\alpha} \log N |\lambda_{i_1}  - x_1| |\lambda_{i_2} - x_2|\, d\mu_V(x_1) \, d\mu_V(x_2)
\leq C N^{2 \xi + \alpha} \log N,
\end{multline*}
which is $\omega(1)$ when divided by $N$.

\noindent \textbf{For $(j_1, j_2) = (2,2)$.} We remark that the strategy is not as straightforward as the case $i \in J_2$ in the proof of \Cref{controle} \cref{controle11}, this is because the term $t(x_1 - x_2) + x_2$ appearing as an argument in \eqref{crosscont} may enter a neighborhood of 0 depending on the indices $i_1, i_2 \in J_2$; so we may not use the bound \Cref{bornebounds} \cref{borneexsup} uniformly in $i_1$, $i_2 \in J_2$. Some care is needed also because $M_N$ is a signed measure so $|M_N(g)|$ need not be bounded by $M_N(|g|)$.

\noindent It will be convenient to use directly \cref{invint} from the proof of \Cref{bornebounds} (this can be done as $J_2$ is located outside the support of $f$). We can write
\begin{multline}
\label{MN2split}
\frac{\Xi^{-1}(\tilde{f})(x)-\Xi^{-1}(\tilde{f})(y)}{x-y} \\
= \frac{1}{\beta \pi^2}\int_{-M}^M \frac{f(u)}{\sigma(E + \frac{u}{N^\alpha})(x-y)} \left(\frac{1}{S(y)(u - N^\alpha(y-E))} - \frac{1}{S(x)(u - N^\alpha(x-E))} \right)\,du\\
= \frac{1}{\beta \pi^2}\int_{-M}^M \frac{f(u)}{\sigma(E + \frac{u}{N^\alpha})} \Bigg\{\frac{S(x) - S(y)}{(x-y)}\frac{1}{S(x)S(y)(u - N^\alpha(y-E))} \\
+ \frac{N^\alpha}{S(x)(u - N^\alpha(x-E))(u - N^\alpha (y-E))}\Bigg\}\,du.
\end{multline}
When we integrate the term on the third line of \eqref{MN2split} against $M_N^{(2)}\otimes M_N^{(2)}$, we obtain
\begin{equation}
\label{MN2first}
\int_{-M}^M \frac{f(u)}{\sigma(E + \frac{u}{N^\alpha})}\left\{\int M_N^{(2)}\left(\int_0^1\frac{ S'(t(\cdot - y) + y)}{S(\cdot)S(y)} dt \right) \frac{1}{(u - N^\alpha(y-E))}dM_N^{(2)}(y)\right\}du,
\end{equation}
define the function
\begin{equation*}
g(y) := M_N^{(2)}\left(\int_0^1\frac{ S'(t(\cdot - y) + y)}{S(\cdot)S(y)} dt \right), 
\end{equation*}
first, $g(y)$ is bounded for any $y\in [a; b]$: 
\begin{multline*}
\left|M_N^{(2)}\left(\int_0^1\frac{ S'(t(\cdot - y) + y)}{S(\cdot)S(y)} dt \right)\right| \\
=\left|\frac{N}{S(y)}\sum_{i\in J_2} \int_{\gamma_i}^{\gamma_{i+1}}\int_0^1   \left(\frac{S'(t(\lambda_i - y) + y)}{S(\lambda_i)} - \frac{S'(t(x - y) + y)}{S(x)} \right)\,dt \,d\mu_V(x)\right|\\
\leq \left|\frac{N}{S(y)}\sum_{i\in J_2} \int_{\gamma_i}^{\gamma_{i+1}}\int_0^1   \frac{S'(t(\lambda_i - y) + y) - S'(t(x - y) + y)}{S(\lambda_i)} dt d \mu_V(x)\right| \\
+\left| \frac{N}{S(y)}\sum_{i\in J_2} \int_{\gamma_i}^{\gamma_{i+1}}\int_0^1   \frac{S(x) - S(\lambda_i)}{S(x)S(\lambda_i)} S'(t(x-y) + y) \,dt \,d\mu_V(x)\right| \leq CN^\xi,
\end{multline*}
where in the final line we used $S$ and $S'$ are smooth on $[a;b]$ (and therefore uniformly Lipschitz), $S > 0$ in a neighborhood of $[a;b]$,  further $|x - \lambda_i| \leq C N^{\xi-1}$, and $|J_2| \leq CN$. Moreover, $g(y)$ is uniformly Lipschitz in $[a; b]$ with constant $CN^\xi$, since:
\begin{multline*}
M_N^{(2)}\left(\int_0^1\frac{S'(t(\cdot - y) + y)}{S(\cdot)S(y)} - \frac{S'(t(\cdot -z) + z)}{S(\cdot)S(z)} dt \right) = \\
(z-y) M_N^{(2)}\left(\int_0^1\int_0^1\frac{t S''(ut(z - y)+ t(\cdot - z) + y)}{S(\cdot)S(y)} dt du\right) \\
+ \frac{S(z) - S(y)}{S(z)S(y)}M_N^{(2)}\left(\int_0^1\frac{ S'(t(\cdot -z) + z)}{S(\cdot)} dt \right)
\end{multline*}
and both terms appearing in $M_N^{(2)}$ above are of the same form as $g$ so they are bounded by $CN^\xi$. Returning to \eqref{MN2first}, we may bound
\begin{multline*}
\left| M_N^{(2)}\left( \frac{g(y)}{u - N^\alpha(y-E)}\right)\right| \\
=\left|N\sum_{i\in J_2} \int_{\gamma_i}^{\gamma_{i+1}} \frac{g(\lambda_i)- g(x)}{(u - N^\alpha (\lambda_i-E))} + \frac{N^\alpha(\lambda_i - x) g(x)}{(u - N^\alpha(\lambda_i - E))(u - N^\alpha(x-E))}\,d\mu_V(x)\right|\\
\leq \int_{[a;b]\cap\{|x - E|\geq \frac{2M }{N^\alpha}\}} \frac{C N^{2\xi}}{|u - N^\alpha(x-E)|}  + \frac{ CN^{2\xi + \alpha} }{(u - N^\alpha(x-E))^2} \,dx\\
\leq C N^{2\xi - \alpha} \log N + C N^{2\xi},
\end{multline*}
uniformly in $u$. Thus \eqref{MN2first} is bounded by $CN^{2\xi}$ as $f$ is bounded.

\noindent The remaining term in \eqref{MN2split} is 
\begin{multline}
\label{MN2scnd}
 N^\alpha \int_{-M}^M \frac{f(u)}{\sigma(E + \frac{u}{N^\alpha})} M_N^{(2)}\left( \frac{1}{S(\cdot)(u - N^\alpha(\, \cdot-E))}\right)M_N^{(2)}\left( \frac{1}{u - N^\alpha(\, \cdot - E)}\right) \,du .
\end{multline}
Repeating our argument in the previous paragraph gives:
\begin{align*}
\left|M_N^{(2)}\left( \frac{1}{S(\cdot)(u - N^\alpha(\, \cdot-E))}\right)\right| &\leq C N^{\xi - \alpha} \log N + C N^\xi,\\
\left|M_N^{(2)}\left( \frac{1}{u - N^\alpha(\, \cdot - E)}\right)\right| &\leq CN^\xi,
\end{align*}
where in the first inequality we use $1/S$ is uniformly bounded and uniformly Lipschitz on $[a;b]$. Inserting the bounds into \eqref{MN2scnd} gives an upper bound of $C N^{2\xi + \alpha}$, as $f$ is bounded.

\noindent Altogether 
\begin{equation*}
\left|\frac{\Xi^{-1}(\tilde{f})(x)-\Xi^{-1}(\tilde{f})(y)}{x-y} dM_N^{(2)}(x) dM_N^{(2)}(y) \right|  \leq C N^{2\xi},
\end{equation*}
which is $\omega(1)$ when divided by $N$.

\noindent\textbf{For $(j_1,j_2) = (3,3)$.} We bound as in the previous case, except now we define
\begin{equation*}
g(y) = M_N^{(3)}\left(\int_0^1\frac{ S'(t(\cdot - y) + y)}{S(\cdot)S(y)} dt \right),   
\end{equation*}
and apply (for $x$ in the region defined in $J_3$):
\begin{equation*}
|\lambda_i - x|  \leq N^{-\frac{2}{3} +\xi } \hat{i}^{-\frac{1}{3}} , \quad \left| \frac{1}{ (u - N^\alpha(x -E))} \right|  \leq \frac{C}{N^\alpha},
\end{equation*}
to obtain
\begin{align*}
\left| M_N^{(3)}\left( \frac{g(y)}{u - N^\alpha(y-E)}\right)\right| &\leq C N^{2\xi - \alpha},\\
\left| M_N^{(3)}\left( \frac{1}{S(\cdot)(u - N^\alpha(\, \cdot-E))}\right)\right| &\leq C N^{\xi - \alpha},\\
\left| M_N^{(3)}\left( \frac{1}{u - N^\alpha(\, \cdot - E)}\right)\right| &\leq CN^{\xi - \alpha},
\end{align*}
altogether giving
\begin{equation*}
\left|\iint\frac{\Xi^{-1}(\tilde{f})(x)-\Xi^{-1}(\tilde{f})(y)}{x-y} dM_N^{(3)}(x) dM_N^{(3)}(y) \right|  \leq C N^{2\xi - \alpha},
\end{equation*}
which is $\omega(1)$ when divided by $N$.

\noindent \textbf{For $(j_1,j_2) =  (1,2)$.} By the bounds $|\lambda_{i_j} - \gamma_{i_j}| \leq C N^{\xi-1}$,  $|\gamma_{i_j} - x_j | \leq \frac{C}{N}$ for $x_j \in [\gamma_{i_j} ; \gamma_{i_j+1}]$, whenever
\begin{equation}
\label{crossTermOutSupp}
N^\alpha |tv (\lambda_{i_1} - x_1)  + ut(x_2 - \lambda_{i_2}) + t(x_{1} - x_2) + u (\lambda_{i_2} - x_2)   + x_{2}  - E| \geq M + 1,
\end{equation}
we have
\begin{equation*}
|t(\gamma_{j_1} - \gamma_{j_2}) + (\gamma_{j_2} - E)| + CN^{\xi - 1} \geq M + 1,
\end{equation*}
by triangle inequality. It follows that for $N$ sufficiently large, uniformly in $x_1 \in [\gamma_{i_1}; \gamma_{i_1 + 1}]$, $x_2 \in [\gamma_{i_2};\gamma_{i_2 + 1}]$, $u,v \in [0;1]$
\begin{multline*}
\frac{1}{|tv (\lambda_{i_1} - x_1)  + ut(x_2 - \lambda_{i_2}) + t(x_{1} - x_2) + u (\lambda_{i_2} - x_2)   + x_{2}  - E|}  \\
\leq \frac{C}{|t(\gamma_{i_1} - \gamma_{i_2}) + (\gamma_{i_2} - E)|},
\end{multline*}
where the constant $C$ only depends on $M$. Therefore, whenever \eqref{crossTermOutSupp} is satisfied, applying \Cref{bornebounds} \cref{borneexsup} yields
\begin{multline}
\label{simplerexsup}
\left|  \Xi^{-1}(\tilde{f})^{(3)} \big(tv (\lambda_{i_1} - x_1)  + ut(x_2 - \lambda_{i_2}) + t(x_{1} - x_2) + u (\lambda_{i_2} - x_2)   + x_{2} \big) \right|  \\
\leq  \frac{C}{N^\alpha \left( (t (\gamma_{i_1} - \gamma_{i_2}) + \gamma_{i_2}  - E)^4 \wedge 1   \right)}.
\end{multline}

\noindent Now, let $ t \in (0 , 1) $ fixed and define the sets
\begin{equation*}
\begin{split}
K_t^1 &:= \left\{ j\in J_2 \ , \ t \left( E - \frac{2M}{N^\alpha}  - \gamma_j\right) + \gamma_j - E \geq \frac{2M}{N^\alpha} \right\}, \\
K_t^2 &:= \left\{ j\in J_2 \ , \ t \left( E + \frac{2M}{N^\alpha}  - \gamma_j\right) + \gamma_j - E \leq - \frac{2M}{N^\alpha} \right\},  \\
K_t &:= K_t^1 \cup K_t^2.
\end{split}
\end{equation*} 
By construction, if $i_2 \in K_t^1$ then 
\begin{equation*}
| t (\gamma_{i_1} - \gamma_{i_2}) + (\gamma_{i_2} - E)| \geq \frac{2M}{N^\alpha}
\end{equation*}
uniformly in $i_1 \in J_1$.  Thus for such $i_2 \in K_t^1$, \eqref{crossTermOutSupp} is satisfied for $N$ sufficiently large (uniformly in $u$, $v$, $x_1$, and $x_2$; also the choice of how large $N$ must be only depends on $\xi$ and $\mu_V$). The same statement holds for $K_t^2$.

\noindent We now proceed to bound \eqref{crosscont} for $j_1 = 1$ and $j_2 = 2$ by splitting $J_2$ into the regions $K_t^1$, $K_t^2$ and $J_2 \backslash K_t$. We start with $K_t^1$ (the argument for $K_t^2$ is identical). Our observations from the previous paragraph along with \eqref{simplerexsup} gives:
\begin{multline*}
\int_T\,du \,dv\,dt\bigg|N^2 
\sum_{\substack{ i_1 \in J_1 \\  i_2 \in K_t^1}} 
\int_{\gamma_{i_1}}^{\gamma_{i_1+1}} \,d\mu_V(x_1) 
\int_{\gamma_{i_2}}^{\gamma_{i_2+1}} \,d\mu_V(x_2) 
\Big\{(\lambda_{i_1} -x_1) (\lambda_{i_2} -x_2) t(1-t) \\
\times \Xi^{-1}(\tilde{f})^{(3)} \big( tv(\lambda_{i_1} - x_1) + ut(x_2 - \lambda_{i_2}) + u(\lambda_{i_2} - x_2) + t(x_1 - x_2)+ x_2\big) \Big\}\bigg| \\
  \leq \int_0^1 \sum_{\substack{i_1 \in J_1 \\ i_2 \in K^1_t}} \frac{C N^{2\xi - 2 -\alpha} t(1-t)}{(t(\gamma_{i_1} - \gamma_{i_2}) + (\gamma_{i_2} - E))^4}\,dt \leq \int_0^1 \sum_{i_2 \in K^1_t} \frac{CN^{2\xi - 1 - 2\alpha} t(1-t)}{\left((1-t)(\gamma_{i_2} - E)- \frac{t2M}{N^\alpha} \right)^4}\,dt
\end{multline*}
where in the final line we used $|J_1| \leq CN^{1-\alpha}$ from \Cref{regions} \ref{regionsCard}. Next, note that
\begin{multline*}
\frac{1}{N} \sum_{i_2 \in K^1_t}\frac{1}{((1-t)(\gamma_{i_2} - E)- \frac{t2M}{N^\alpha} )^4} \\
\leq C\int_{E + \frac{2M}{N^\alpha}\left(\frac{1 + t}{1-t}\right)}^{E + \frac{1}{2}(E-a)\wedge (b-E)} \frac{dx}{((1-t)(x - E) - \frac{t 2M}{N^\alpha})^4} \leq \frac{CN^{3\alpha}}{1-t},
\end{multline*}
since, by definition of $K_t^1$, $\gamma_{i_2} \geq E + \frac{2M}{N^\alpha}\left(\frac{1+t}{1-t}\right)$. We conclude,
\begin{equation*}
\int_0^1 \sum_{i_2 \in K^1_t} \frac{CN^{2\xi - 1 - 2\alpha} t(1-t)}{\left((1-t)(\gamma_{i_2} - E)- \frac{t2M}{N^\alpha} \right)^4}\,dt\leq C N^{2\xi + \alpha}.
\end{equation*}

\noindent We continue with $J_2 \backslash K_t$. By the same argument as in \Cref{regions} \ref{regionsCard} $|J_2 \backslash K_t | \leq \frac{C N^{1-\alpha}}{1-t}$ where the constant $C$ does not depend on $t$, we use this in addition with \Cref{bornebounds} \cref{borneexsup}, $|J_1| \leq C N^{1-\alpha}$, and $|\lambda_{i_j} - x_j| \leq CN^{\xi - 1}$ to obtain the bound
\begin{multline*}
\int_T\,du \,dv\,dt\bigg|N^2 
\sum_{\substack{ i_1 \in J_1 \\  i_2 \in J_2\backslash K_t}} 
\int_{\gamma_{i_1}}^{\gamma_{i_1+1}} \,d\mu_V(x_1) 
\int_{\gamma_{i_2}}^{\gamma_{i_2+1}} \,d\mu_V(x_2) 
\Big\{(\lambda_{i_1} -x_1) (\lambda_{i_2} -x_2) t(1-t) \\
\times \Xi^{-1}(\tilde{f})^{(3)} \big( tv(\lambda_{i_1} - x_1) + ut(x_2 - \lambda_{i_2}) + u(\lambda_{i_2} - x_2) + t(x_1 - x_2)+ x_2\big) \Big\}\bigg| \\
  \leq C\int_0^1 N^{3\alpha} \log N \times N^{2\xi - 2} \times N^{2 - 2 \alpha}  t \,dt \leq C N^{\alpha + 2 \xi} \log N.
\end{multline*}

\noindent Combining the bounds we have obtained gives
\begin{equation*}
\left|\iint \frac{\Xi^{-1}\tilde{f}(x) - \Xi^{-1}\tilde{f}(y)}{x-y} dM_N^{(1)}(x) dM_N^{(2)}(y)\right| \leq  C N^{\alpha + 2\xi} \log N,
\end{equation*}
which is $\omega(1)$ when divided by $N$  for $\xi$ small enough.

\noindent \textbf{For $j_1 = 1$ or $2$ and $j_2 = 3$.} the proof  is similar and we omit the details.
\end{proof}

\subsection{Proof of \Cref{theoreme}}
\noindent We proceed with the proof of \Cref{theoreme}. Applying the loop equation \eqref{loop1} to $h= \Xi^{-1}(\tilde{f})$  yields
\begin{multline*}
F_1^N\left(\Xi^{-1}(\tilde{f})\right) = \\
M_N (\tilde{f})  + \left(1 -\frac{\beta}{2}\right) L_N\left( (\Xi^{-1} \tilde{f})'\right) +  \frac{1}{N} \left[\frac{\beta}{2}\iint \frac{\Xi^{-1}\tilde{f}(x) - \Xi^{-1}\tilde{f}(y)}{x-y} dM_N(x) dM_N(y)\right].
\end{multline*}
Combining   \Cref{controle2} \cref{mean} and \cref{cross}  we get 
\begin{equation}\label{rang1}
F_1^N(\Xi^{-1}(\tilde{f}))= M_N(\tilde{f}) + \omega(1) .
\end{equation}
\noindent Using the first loop equation from \Cref{loopequations}, and the fact that $M_N(\tilde{f})$ is bounded by $2 N \|f\|_{C(\mathbb{R})}$ gives
\begin{equation}\label{esp1}
\mathbb{E}_V^N \big(M_N(\tilde{f}) \big) = o(1) .
\end{equation}

\noindent We now show recursively that 
\begin{equation}
F_k^N(\Xi^{-1}(\tilde{f}), \tilde{f}, \cdots , \tilde{f} )= \tilde{M}_N(\tilde{f})^k - (k-1) \sigma^2_f \  \tilde{M}_N(\tilde{f})^{k-2}+ \omega(1) .
\end{equation}
Here, the set on which the bound holds might vary from one $k$ to another but each bound has probability greater than $1 - e^{-N^{c_k}}$ for each fixed $k$. 

\noindent The bound holds for $k=1$, by \eqref{rang1}. Now, assume this holds for $k \geq 1$. Then by Proposition \ref{loopequations} we have
\begin{equation}
 F^N_{k+1}(\Xi^{-1}(\tilde{f}), \tilde{f}, \cdots , \tilde{f})  = F^N_k(\Xi^{-1}(\tilde{f}), \tilde{f}, \cdots , \tilde{f}) \tilde{M}_N(\tilde{f}) +  \tilde{M}_N(\tilde{f})^{k-1} \  L_N(\Xi^{-1}(\tilde{f}) \tilde{f}')
\end{equation}
On a set of probability greater than  $1 - e^{-N^{c_{k+1}}}$ we have by the induction hypothesis,  \Cref{controle} \cref{controle1}, and  \Cref{controle2} \cref{variance}, for some $\delta > 0$ and a constant $C$
\begin{equation*}
\big| F_k^N(\Xi^{-1}(\tilde{f}), \tilde{f}, \cdots , \tilde{f} ) - \tilde{M}_N(\tilde{f})^k - (k-1) \sigma^2_f \  \tilde{M}_N(\tilde{f})^{k-2} \big| \leq \frac{C}{N^\delta},
\end{equation*}
 \begin{equation*}
\big| L_N \Big(\Xi^{-1}(\tilde{f}) \tilde{f}'  \Big) + \sigma_f^2  \big| \leq \frac{C}{N^\delta} \ , 
\end{equation*}
\begin{equation*}
\big|M_N(\tilde{f})\big| \leq N^{\delta/2k}  \ .   
\end{equation*}
\noindent And this proves the induction. Using the fact that $F_k$ is bounded polynomially and deterministically, the computation of the moments is then straightforward and this concludes the proof of Theorem \ref{theoreme}.

\begin{rem}
The same proof would  also  show the macroscopic central limit Theorem already shown in \cite{BGII,ShI,J} but with  less restrictive condition $V \in C^6(\mathbb{R})$ and $f \in C^5(\mathbb{R})$ with appropriate decay conditions. 
\end{rem}

\bibliographystyle{abbrv}
\bibliography{biblio}

\end{document}